\numberwithin{equation}{section}
\theoremstyle{plain}
\newtheorem{theorem}{Theorem}[section]
\newtheorem{lemma}[theorem]{Lemma}
\theoremstyle{remark}
\newtheorem*{ack}{Acknowledgement}
\theoremstyle{definition}
\newtheorem{definition}[theorem]{Definition}
\newtheorem{question}[theorem]{Question}
\newcommand{\R}{\mathbb{R}}
\newcommand{\N}{\mathbb{N}}
\newcommand{\cN}{\mathcal{N}}
\newcommand{\cL}{\mathcal{L}}
\newcommand{\cU}{\mathcal{U}}
\newcommand{\cF}{\mathcal{F}}
\newcommand{\cA}{\mathcal{A}}
\renewcommand{\epsilon}{\varepsilon}
\renewcommand{\rho}{\varrho}
\renewcommand{\phi}{\varphi}
\begin{document}

\title[Digit frequencies of beta-expansions]{Digit frequencies of beta-expansions}

\author{Yao-Qiang Li}
\address{Institut de Math\'ematiques de Jussieu-Paris Rive Gauche \\
         Sorbonne Universit\'e \\
         Paris, 75005 \\
         France}
\address{School of Mathematics \\
         South China University of Technology \\
         Guangzhou, 510641 \\
         P.R. China}
\email{yaoqiang.li@imj-prg.fr\quad yaoqiang.li@etu.upmc.fr}
\email{scutyaoqiangli@qq.com\quad scutyaoqiangli@gmail.com}

\subjclass[2010]{Primary 11A63; Secondary 11K55.}
\keywords{$\beta$-expansion, digit frequency, pseudo-golden ratio}
\date{\today}

\begin{abstract}
Let $\beta>1$ be a non-integer. First we show that Lebesgue almost every number has a $\beta$-expansion of a given frequency if and only if Lebesgue almost every number has infinitely many $\beta$-expansions of the same given frequency. Then we deduce that Lebesgue almost every number has infinitely many balanced $\beta$-expansions, where an infinite sequence on the finite alphabet $\{0,1,\cdots,m\}$ is called balanced if the frequency of the digit $k$ is equal to the frequency of the digit $m-k$ for all $k\in\{0,1,\cdots,m\}$. Finally we consider variable frequency and prove that for every pseudo-golden ratio $\beta\in(1,2)$, there exists a constant $c=c(\beta)>0$ such that for any $p\in[\frac{1}{2}-c,\frac{1}{2}+c]$, Lebesgue almost every $x$ has infinitely many $\beta$-expansions with frequency of zeros equal to $p$.
\end{abstract}

\maketitle

\section{introduction}

To represent real numbers, the most common way is to use expansions in integer bases, especially in base $2$ or $10$. As a natural generalization, expansions in non-integer bases were introduced by R\'enyi \cite{R57} in 1957, and then attracted a lot of attention until now (see for examples \cite{ACS09,AC01,B89,BL16,FW12,FS92,LW08,LL18,P60,S97,S80}). They are known as beta-expansions nowadays.

Let $\N=\{1,2,3,\cdots\}$ be the set of positive integers and $\R$ be the set of real numbers. For $\beta>1$, we define the alphabet by
$$\cA_\beta=\left\{\begin{array}{ll}
\{0,1,\cdots,\beta-1\} & \text{if } \beta\in\N, \\
\{0,1,\cdots,\lfloor\beta\rfloor\} & \text{if } \beta\notin\N,
\end{array}\right.$$
where $\lfloor\beta\rfloor$ denotes the greatest integer no larger than $\beta$, and similarly we use $\lceil\beta\rceil$ to denote the smallest integer no less than $\beta$ throughout this paper. Let $x\in\R$. A sequence $(\epsilon_i)_{i\ge1}\in\cA_\beta^\N$ is called a \textit{$\beta$-expansion} of $x$ if
$$x=\sum_{i=1}^\infty\frac{\epsilon_i}{\beta^i}.$$
For $\beta>1$, let $I_\beta$ be the interval $[0,1]$ if $\beta\in\N$ and $[0,\frac{\lfloor\beta\rfloor}{\beta-1}]$ if $\beta\notin\N$, and let $I_\beta^o$ be the interior of $I_\beta$ (i.e. $I_\beta^o=(0,1)$ if $\beta\in\N$ and $I_\beta^o=(0,\frac{\lfloor\beta\rfloor}{\beta-1})$ if $\beta\notin\N$ ). It is straightforward to check that $x$ has a $\beta$-expansion if and only if $x\in I_\beta$. An interesting phenomenon is that an $x$ may have many $\beta$-expansions. For examples, \cite[Theorem 3]{EJK90} shows that if $\beta\in(1,\frac{1+\sqrt{5}}{2})$, every $x\in I_\beta^o$ has a continuum of different $\beta$-expansions, and \cite[Theorem 1]{S03} shows that if $\beta\in(1,2)$, Lebesgue almost every $x\in I_\beta$ has a continuum of different $\beta$-expansions. For more on the cardinality of $\beta$-expansions, we refer the reader to \cite{BS14,EJK94,GS01}.

In this paper we focus on the digit frequencies of $\beta$-expansions, which is a classical research topic. For examples, Borel's normal number theorem \cite{B09} says that for any integer $\beta>1$, Lebesgue almost every $x\in[0,1]$ has a $\beta$-expansion in which every finite word on $\cA_\beta$ with length $k$ occurs with frequency $\beta^{-k}$; Eggleston \cite{E49} proved that for each $p\in[0,1]$, the Hausdorff dimension (see \cite{F14} for definition) of the set, consisting of those $x\in[0,1]$ having a binary expansion with frequency of zeros equal to $p$, is equal to $(-p\log p-(1-p)\log(1-p))/(\log2)$. Let $\beta_T\approx1.80194$ be the unique zero in $(1,2]$ of the polynomial $x^3-x^2-2x+1$. Recently, on the one hand, Baker and Kong \cite{BK18} proved that if $\beta\in(1,\beta_T]$, then every $x\in I_\beta^o$ has a \textit{simply normal} $\beta$-expansion (i.e., the frequency of each digit is the same), and on the other hand, Jordan, Shmerkin and Solomyak \cite{JSS11} prove that if $\beta\in(\beta_T,2]$, then there exists $x\in I_\beta^o$ which does not have any simply normal $\beta$-expansions.

Let $m\in\N$. For any sequence $(\epsilon_i)_{i\ge1}\in\{0,1,\cdots,m\}^\N$, we define the \textit{upper-frequency}, \textit{lower-frequency} and \textit{frequency} of the digit $k$ by
$$\overline{\text{Freq}}_k(\epsilon_i):=\varlimsup_{n\to\infty}\frac{\sharp\{1\le i\le n:\epsilon_i=k\}}{n},$$
$$\underline{\text{Freq}}_k(\epsilon_i):=\varliminf_{n\to\infty}\frac{\sharp\{1\le i\le n:\epsilon_i=k\}}{n}$$
and
$$\text{Freq}_k(\epsilon_i):=\lim_{n\to\infty}\frac{\sharp\{1\le i\le n:\epsilon_i=k\}}{n}$$
(assuming the limit exists) respectively, where $\sharp$ denotes the cardinality. If $\overline{p}=(\overline{p}_0,\cdots,\overline{p}_m)$, $\underline{p}=(\underline{p}_0,\cdots,\underline{p}_m)\in[0,1]^m$ satisfy
$$\overline{\text{Freq}}_k(\epsilon_i)=\overline{p}_k\quad\text{and}\quad\underline{\text{Freq}}_k(\epsilon_i)=\underline{p}_k\quad\text{for all }k\in\{0,1,\cdots,m\},$$
we say that $(\epsilon_i)_{i\ge1}$ is of frequency $(\overline{p},\underline{p})$.

The following theorem is the first main result in this paper.

\begin{theorem}\label{main1}
For all $\beta\in(1,+\infty)\setminus\N$ and $\overline{p},\underline{p}\in[0,1]^{\lceil\beta\rceil}$, Lebesgue almost every $x\in I_\beta$ has a $\beta$-expansion of frequency $(\overline{p},\underline{p})$ if and only if Lebesgue almost every $x\in I_\beta$ has infinitely many $\beta$-expansions of frequency $(\overline{p},\underline{p})$.
\end{theorem}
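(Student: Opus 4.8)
The ``if'' direction is trivial, so the plan is to prove ``only if''. Suppose Lebesgue-almost every $x\in I_\beta$ has a $\beta$-expansion of frequency $(\overline p,\underline p)$; write $A$ for this set, so $I_\beta\setminus A$ is null. I would work with the branch maps $T_k(x)=\beta x-k$ ($k\in\cA_\beta$), set $T_w=T_{w_n}\circ\cdots\circ T_{w_1}$ for a finite word $w=w_1\cdots w_n$, and call $w$ \emph{admissible at $x$} if $T_{w_1\cdots w_j}(x)\in I_\beta$ for all $j\le n$. Two soft facts are the backbone: (i) since inserting finitely many digits at the front of a sequence changes none of the quantities $\overline{\mathrm{Freq}}_k,\underline{\mathrm{Freq}}_k$, concatenating an admissible word $w$ at $x$ with any frequency-$(\overline p,\underline p)$ expansion of $T_w(x)$ yields a frequency-$(\overline p,\underline p)$ expansion of $x$; (ii) each $T_k$ is affine, hence on the set where it maps into $I_\beta$ it sends Lebesgue-null sets to Lebesgue-null sets and back. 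Let $f(x)\in\{1,2,\dots,\infty\}$ be the number of frequency-$(\overline p,\underline p)$ expansions of $x$ and $A_N=\{x\in I_\beta:f(x)\ge N\}$; the hypothesis is $|I_\beta\setminus A_1|=0$, and since $A_\infty=\bigcap_N A_N$ the conclusion reduces to proving $|I_\beta\setminus A_N|=0$ for every $N$, which I would do by induction.

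The inductive step I would extract as a \emph{branching lemma}: for any full-measure set $B\subseteq I_\beta$, almost every $x$ has two distinct words $w\ne w'$ of a common length, both admissible at $x$, with $T_w(x),T_{w'}(x)\in B$. Applying this with $B=A_N$ (full by the inductive hypothesis), almost every $x$ then has at least $2N\ge N+1$ distinct frequency-$(\overline p,\underline p)$ expansions: the $\ge N$ obtained by prepending $w$ to frequency-$(\overline p,\underline p)$ expansions of $T_w(x)$ (using (i)), together with the $\ge N$ obtained the same way from $w'$, the two families being disjoint because $w\ne w'$. This gives $|I_\beta\setminus A_{N+1}|=0$ and closes the induction. (Alternatively, one runs the argument in a single pass: a.e.\ $x$ has infinitely many nested admissible prefixes $v_1\prec v_2\prec\cdots$ at each of which $T_{v_j}(x)$ has at least two admissible continuations, and combining this with (ii) gives $f(x)=\infty$ for a.e.\ $x$ at once.)

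For the branching lemma I would introduce the switch region $\cS=\{x\in I_\beta:\sharp\{k\in\cA_\beta:T_k(x)\in I_\beta\}\ge2\}$ and verify by a short computation that $\cS$ is a finite union of \emph{nondegenerate} intervals, the nondegeneracy being precisely the point where $\beta\notin\N$ enters (it amounts to $\beta-1<\lfloor\beta\rfloor$). The crux --- and what I expect to be the main obstacle --- is to show that the set $E$ of those $x$ for which \emph{no} admissible prefix carries $x$ into $\cS$ is Lebesgue-null; equivalently, $E$ is exactly the set of numbers having a unique $\beta$-expansion, and one must know this set is null for \emph{every} non-integer $\beta$. For $\beta\in(1,2)$ this is contained in \cite{S03}; in general I would deduce it from the ergodicity of the greedy $\beta$-transformation with respect to its R\'enyi--Parry measure (which is equivalent to Lebesgue measure), together with $|\cS|>0$ and the elementary fact that the greedy orbit of a.e.\ point eventually enters the core $[0,1)$, so that a.e.\ greedy orbit visits $\cS$. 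Granting $|E|=0$, the lemma follows by routine measure theory: since $I_\beta\setminus B$ is null, so is the set $N_B:=\bigcup_u T_u^{-1}\big(\{s\in\cS: T_k(s)\notin B\text{ for some }k\text{ with }T_k(s)\in I_\beta\}\big)$, the union running over the countably many finite words $u$ (each $T_u$ being affine on the relevant domain); and for any $x\notin E\cup N_B$ --- a full-measure set --- one picks, using $x\notin E$, an admissible $w$ with $T_w(x)\in\cS$, observes via $x\notin N_B$ that all admissible children of $T_w(x)$ lie in $B$, and selects two distinct such children $k_1\ne k_2$, so that $wk_1,wk_2$ are the desired pair of words. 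Thus the only genuine input is the nullity of the univoque set (equivalently, that $\cS$ is reachable from a.e.\ starting point); everything else is soft measure theory together with the tail-invariance of digit frequencies.
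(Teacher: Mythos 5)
Your proposal is correct, and although it rests on the same two pillars as the paper's argument --- the Lebesgue-nullity of the univoque set $\cU_\beta$, and the observation that prepending a finite admissible word to a frequency-$(\overline{p},\underline{p})$ expansion of the image point gives a frequency-$(\overline{p},\underline{p})$ expansion of $x$, all glued together by pulling null sets back through the countably many affine inverse branches --- it is organized genuinely differently. The paper defines a single null set $\Psi$ (the union of $\cU_\beta\cup\cN_\beta^{\overline{p},\underline{p}}$ with all its preimages under the finite compositions $T_{\epsilon_n}^{-1}\circ\cdots\circ T_{\epsilon_1}^{-1}$) and, for each fixed $x\notin\Psi$, runs a direct recursion along one expansion $(\epsilon_i)$ of $x$: non-uniqueness of the expansion of the tail point $T_{\epsilon_{n_j}}\circ\cdots\circ T_{\epsilon_1}x$ produces a branching position $n_{j+1}>n_j$, and avoidance of $\cN_\beta^{\overline{p},\underline{p}}$ at the branched image lets one complete to a frequency expansion differing from all previous ones; this exhibits infinitely many expansions at once, with no induction and no doubling. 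You instead prove ``a.e.\ $x$ has at least $N$ frequency expansions'' by induction on $N$ through a branching lemma valid for an arbitrary full-measure set $B$; that variant is sound (the $2N\ge N+1$ count and the disjointness of the two prepended families are fine, and completeness of Lebesgue measure removes any measurability concern about your sets $A_N$). The other real difference is that the paper simply cites $\cL(\cU_\beta)=0$ (\cite{DK09,KL14}), whereas you rederive it: your identification of $E$ with $\cU_\beta$ via the switch region $\cS$ is correct (an admissible word $w$ with $T_w(x)\in\cS$ can be continued in two ways because every point of $I_\beta$ has at least one expansion), and the ergodic route --- greedy orbits of a.e.\ point eventually enter $[0,1)$, the greedy map there is the R\'enyi map with its Parry measure equivalent to Lebesgue, and $\cS\cap[0,1)$ contains a nondegenerate interval with left endpoint $\tfrac1\beta$ precisely because $\lfloor\beta\rfloor>\beta-1$ for non-integer $\beta$ --- does yield $\cL(\cU_\beta)=0$ in full generality, with \cite{S03} covering $\beta\in(1,2)$ as you note. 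The trade-off: the paper's construction is shorter given the cited fact and produces the whole infinite family outside one explicit null set, while yours is more self-contained (only R\'enyi's ergodicity is imported) and packages the key step as a reusable branching lemma, at the cost of an extra inductive layer that the direct construction avoids.
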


As the second main result, the next theorem focuses on a special kind of frequency. Let $m\in\N$. A sequence $(\epsilon_i)_{i\ge1}\in\{0,1,\cdots,m\}^\N$ is called \textit{balanced} if Freq$_k(\epsilon_i)=$Freq$_{m-k}(\epsilon_i)$ for all $k\in\{0,1,\cdots,m\}$.

\begin{theorem}\label{main2}
For all $\beta\in(1,+\infty)\setminus\N$, Lebesgue almost every $x\in I_\beta$ has infinitely many balanced $\beta$-expansions.
\end{theorem}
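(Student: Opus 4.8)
The plan is to combine Theorem~\ref{main1} with the reflection symmetry of $I_\beta$. Write $m:=\lfloor\beta\rfloor$, so that $\cA_\beta=\{0,1,\dots,m\}$, and let $r\colon I_\beta\to I_\beta$ be the involution $r(x):=\frac{m}{\beta-1}-x$. Since $\sum_{i\ge1}m\beta^{-i}=\frac{m}{\beta-1}$, the digitwise complement $(m-\epsilon_i)_{i\ge1}$ of any $\beta$-expansion of $x$ is a $\beta$-expansion of $r(x)$; equivalently, the set $A(x):=\{a\in\cA_\beta:\beta x-a\in I_\beta\}$ of admissible first digits satisfies $A(r(x))=m-A(x)$ for all $x$. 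Thus the entire branching structure of $\beta$-expansions is invariant under the involution ``reflect the point and complement each digit''; in particular the lazy $\beta$-transformation is $L_\beta=r\circ T_\beta\circ r$, where $T_\beta$ denotes the greedy one.

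The key step is to show that Lebesgue-a.e.\ $x\in I_\beta$ has at least one $\beta$-expansion whose digit frequencies all exist and form a balanced vector. For this I would run the random dynamical system that alternates between $T_\beta$ and $L_\beta$: let $(\xi_n)_{n\ge1}$ be i.i.d.\ fair $\{0,1\}$-tosses, put $y_0=x$, $y_n=T_\beta(y_{n-1})$ if $\xi_n=0$ and $y_n=L_\beta(y_{n-1})$ if $\xi_n=1$, and let $\epsilon_n$ be the digit removed at step $n$. Then $(\epsilon_n)_{n\ge1}$ is always a $\beta$-expansion of $x$, and for $\beta\in(1,2)$ this is precisely the random $\beta$-transformation of Dajani--Kraaikamp. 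By the standard transfer-operator/Lasota--Yorke theory for random piecewise-expanding maps, this system has an absolutely continuous stationary probability measure $\nu$ on $I_\beta$, equivalent to Lebesgue measure; assuming, as is standard, that it is the unique absolutely continuous stationary measure, $\nu$ is automatically ergodic for the skew product $(\xi,x)\mapsto(\sigma\xi,F_{\xi_1}x)$, and the measure-preserving involution $(\xi,x)\mapsto(\bar\xi,r(x))$ (complementing the coin) conjugates this skew product to itself, which forces $r_*\nu=\nu$. Birkhoff's theorem then gives, for Lebesgue-a.e.\ $x$ and a.e.\ coin sequence, that digit $k$ occurs in $(\epsilon_n)$ with frequency $g_k=\int_{I_\beta}\tfrac12(\1[\,d^{\mathrm g}(x){=}k\,]+\1[\,d^{\mathrm l}(x){=}k\,])\,d\nu(x)$, where $d^{\mathrm g},d^{\mathrm l}$ are the greedy and lazy first digits; since $d^{\mathrm l}(x)=m-d^{\mathrm g}(r(x))$ and $r_*\nu=\nu$, one obtains $g_k=g_{m-k}$ for all $k$, so $\vec g:=(g_0,\dots,g_m)\in[0,1]^{\lceil\beta\rceil}$ is balanced. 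Hence Lebesgue-a.e.\ $x$ has a $\beta$-expansion of frequency $(\vec g,\vec g)$.

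It then remains only to apply Theorem~\ref{main1} with $\overline p=\underline p=\vec g$: since Lebesgue-a.e.\ $x\in I_\beta$ has a $\beta$-expansion of frequency $(\vec g,\vec g)$, Lebesgue-a.e.\ $x$ has infinitely many of them, and every one of these is balanced. This proves Theorem~\ref{main2}.

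The main obstacle is confined to the middle step, and it is ergodic-theoretic bookkeeping rather than a new idea: one must run the random greedy/lazy system for \emph{every} non-integer $\beta>1$, not only $\beta\in(1,2)$ where the Dajani--Kraaikamp picture is classical, and establish existence, absolute continuity, equivalence to Lebesgue, and uniqueness of its stationary measure; granted $r_*\nu=\nu$, the identity $g_k=g_{m-k}$ is immediate. (For $\beta\in(1,\beta_T]$ one can bypass this altogether: by Baker--Kong every $x\in I_\beta^o$ has a simply normal, hence balanced, $\beta$-expansion, so Theorem~\ref{main1} with $\vec g=(\tfrac1{m+1},\dots,\tfrac1{m+1})$ already gives the conclusion there; the random-system argument is needed only for its uniformity in $\beta$.)
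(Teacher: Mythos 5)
Your overall skeleton is the same as the paper's: produce, for Lebesgue-a.e.\ $x$, one $\beta$-expansion whose digit-frequency vector exists, is \emph{fixed} (independent of $x$), and is balanced, and then feed that single vector into Theorem~\ref{main1} with $\overline p=\underline p$. Where you differ is in how that key step is obtained. The paper (Lemma~\ref{lemma}) builds a deterministic $\beta$-transformation whose branch cut points $z_1,\dots,z_{\lfloor\beta\rfloor}$ are placed symmetrically with respect to $x\mapsto\frac{\lfloor\beta\rfloor}{\beta-1}-x$, invokes Wilkinson's theorem to get an ergodic invariant probability measure equivalent to Lebesgue on the central interval $[z_0,z_{\lceil\beta\rceil})$, uses the symmetry together with Birkhoff to get $\mu((z_k,z_{k+1}))=\mu((z_{\lfloor\beta\rfloor-k},z_{\lceil\beta\rceil-k}))$, and then spreads the conclusion to $(0,z_0)$ and $(z_{\lceil\beta\rceil},\frac{\lfloor\beta\rfloor}{\beta-1})$ by pulling back under $T_0$ and $T_{\lfloor\beta\rfloor}$ (the case $\beta\in(1,2)$ is simply quoted from Baker--Kong). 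Your route through the random greedy/lazy system is genuinely different and has the virtue of treating all non-integer $\beta>1$ uniformly; your symmetry argument ($L_\beta=r\circ T_\beta\circ r$, the coin-complementing involution commuting with the skew product, hence $r_*\nu=\nu$ by uniqueness, hence $g_k=g_{m-k}$) is correct, and the final application of Theorem~\ref{main1} is exactly as in the paper.

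The one point you should not wave away as ``ergodic-theoretic bookkeeping'' is the package of facts you assume about the random system: existence of an absolutely continuous stationary probability measure, its \emph{uniqueness} (hence ergodicity of the skew product), and its \emph{equivalence} to Lebesgue on $I_\beta$, for every non-integer $\beta>1$. Generic Lasota--Yorke theory gives existence of an absolutely continuous invariant measure for piecewise expanding maps, but it does not give uniqueness (piecewise expanding maps can carry several ergodic acims) nor strict positivity of the density, and both are load-bearing in your argument: uniqueness is what forces $r_*\nu=\nu$, and equivalence (not mere absolute continuity) is what upgrades the Birkhoff conclusion from $\nu$-a.e.\ to Lebesgue-a.e.\ $x$; with only $\nu\ll\cL$ the proof would not close without an extra spreading argument of the type the paper uses. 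These facts are true, but they are specific theorems about the random $\beta$-transformation (Dajani--Kraaikamp, and Dajani--de Vries for uniqueness, ergodicity and the invariant density, valid for all non-integer $\beta>1$), so your proof is completable by citing them precisely; as written, the crux of the argument is exactly the part you defer. By comparison, the paper's black box (Wilkinson's theorem for its symmetric piecewise linear map, plus the preimage argument) is of similar weight, so neither route is clearly cheaper; yours buys uniformity in $\beta$ and a reproof of the $\beta\in(1,2)$ case, the paper's stays within deterministic one-dimensional dynamics.
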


In the following, we consider variable frequency. Recently, Baker proved in \cite{B18} that for any $\beta\in(1,\frac{1+\sqrt5}{2})$, there exists $c=c(\beta)>0$ such that for any $p\in[\frac{1}{2}-c,\frac{1}{2}+c]$ and $x\in I_\beta^o$, there exists a $\beta$-expansion of $x$ with frequency of zeros equal to $p$. This result is sharp, since for any $\beta\in[\frac{1+\sqrt5}{2},2)$, there exists an $x\in I_\beta^o$ such that for any $\beta$-expansion of $x$ its frequency of zeros exists and is equal to either $0$ or $\frac{1}{2}$ (see the statements between Theorem 1.1 and Theorem 1.2 in \cite{BK18}). It is natural to ask for which $\beta\in[\frac{1+\sqrt5}{2},2)$, the result can be true for almost every $x\in I_\beta^o$. We give a class of such $\beta$ in Theorem \ref{main3} as the third main result in this paper. They are the \textit{pseudo-golden ratios}, i.e., the $\beta\in(1,2)$ such that $\beta^m-\beta^{m-1}-\cdots-\beta-1=0$ for some integer $m\ge2$. Note that the smallest pseudo-golden ratio is the golden ratio $\frac{1+\sqrt5}{2}$.

\begin{theorem}\label{main3}
Let $\beta\in(1,2)$ such that $\beta^m-\beta^{m-1}-\cdots-\beta-1=0$ for some integer $m\ge2$ and let $c=\frac{(m-1)(2-\beta)}{2(m\beta+\beta-2m)}$ $(>0)$. Then for any $p\in[\frac{1}{2}-c,\frac{1}{2}+c]$,
Lebesgue almost every $x\in I_\beta$ has infinitely many $\beta$-expansions with frequency of zeros equal to $p$.
\end{theorem}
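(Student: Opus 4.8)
The plan is to derive Theorem~\ref{main3} from Theorem~\ref{main1} together with the ergodic theory of the greedy and of the random $\beta$-transformation. Since the alphabet is $\{0,1\}$, a $\beta$-expansion has frequency of zeros equal to $p$ precisely when it has frequency $\big((p,1-p),(p,1-p)\big)$ in the sense of Theorem~\ref{main1}; hence by Theorem~\ref{main1} it suffices to prove that, for each fixed $p\in[\tfrac12-c,\tfrac12+c]$, Lebesgue-a.e.\ $x\in I_\beta$ has at least one $\beta$-expansion with frequency of zeros equal to $p$. Put $q:=\frac{\beta-1}{(m+1)\beta-2m}$; using the defining relation of a pseudo-golden ratio in the form $\beta^{-m}=2-\beta$ one checks the elementary identities $q=\tfrac12+c$ and $1-q=\tfrac12-c$, so the target interval is exactly $[\,1-q,\,q\,]$.

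First I would identify the two endpoints as the almost-sure frequencies of zeros of the greedy and of the lazy expansions. For a pseudo-golden ratio the greedy expansion of $1$ is $1^m0^\infty$, so the orbit of $1$ under the greedy map $T\colon x\mapsto\beta x-\lfloor\beta x\rfloor$ of $[0,1)$ is the strictly decreasing sequence $1>\beta-1>\cdots>\tfrac1\beta>0$; consequently Parry's invariant density is constant (equal to its value at $0$) on the digit-$0$ set $[0,\tfrac1\beta)$, and a short computation gives that the normalised invariant measure $\mu_\beta$ assigns mass exactly $q$ to $[0,\tfrac1\beta)$. Since the greedy orbit of Lebesgue-a.e.\ $x\in I_\beta$ eventually enters and stays in $[0,1)$, and $T$ is ergodic for $\mu_\beta$, Birkhoff's ergodic theorem shows that the greedy expansion of a.e.\ $x$ has frequency of zeros $q=\tfrac12+c$; applying the reflection $x\mapsto\frac1{\beta-1}-x$, $\epsilon_i\mapsto 1-\epsilon_i$, which swaps greedy and lazy expansions, the lazy expansion of a.e.\ $x$ has frequency of zeros $1-q=\tfrac12-c$. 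This also explains the value of $c$.

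For $p$ in the open interval $(1-q,q)$ I would interpolate between these two extremes by means of the random $\beta$-transformation. The only region where the digit is not forced is the switch region $S=[\tfrac1\beta,\tfrac1{\beta(\beta-1)}]$ (digit $0$ requires $x\le\tfrac1{\beta(\beta-1)}$, digit $1$ requires $x\ge\tfrac1\beta$); for $\phi\in(0,1)$ let $K_\phi$ act on $\{0,1\}^{\mathbb N}\times I_\beta$ by shifting the first coordinate and, on $I_\beta$, by the unique admissible map off $S$ and, inside $S$, by the digit dictated by the first symbol of the $\{0,1\}$-sequence (read as ``emit $1$ with probability $\phi$''). With the Bernoulli$(\phi)$ measure $\mathbf P_\phi$ fixed on the first factor, there is a unique $\nu_\phi$ with $\mathbf P_\phi\times\nu_\phi$ invariant under $K_\phi$; it is absolutely continuous, $K_\phi$ is ergodic, and $\nu_\phi$ is equivalent to Lebesgue measure on $I_\beta$ --- this is the known ergodic theory of random $\beta$-expansions (Dajani--Kraaikamp, Dajani--de~Vries). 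By Birkhoff's theorem, for $(\mathbf P_\phi\times\nu_\phi)$-a.e.\ pair $(x,\omega)$ the emitted $\beta$-expansion of $x$ has frequency of zeros equal to $g(\phi):=\nu_\phi([0,\tfrac1\beta))+(1-\phi)\,\nu_\phi(S)$; the degenerate choices $\phi=1$ and $\phi=0$ are pure greedy and pure lazy and give $g(1)=q$, $g(0)=1-q$. Since $\nu_\phi$ is non-atomic and $\phi\mapsto\nu_\phi$ is weakly continuous (the invariant densities being given by explicit piecewise-constant formulas), $g$ is continuous on $[0,1]$, so by the intermediate value theorem there is $\phi=\phi(p)\in(0,1)$ with $g(\phi)=p$. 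Finally, as $\nu_\phi$ is equivalent to Lebesgue, Fubini's theorem applied to the $(\mathbf P_\phi\times\nu_\phi)$-full set of pairs above yields that for Lebesgue-a.e.\ $x\in I_\beta$ there exists a choice sequence $\omega$ for which the resulting $\beta$-expansion of $x$ has frequency of zeros exactly $p$. Together with the two endpoint cases this gives, for every $p\in[\tfrac12-c,\tfrac12+c]$, a $\beta$-expansion of the required frequency for a.e.\ $x$, and Theorem~\ref{main1} then upgrades ``one such expansion'' to ``infinitely many''.

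The main obstacle I anticipate is the ergodic input used in the third paragraph: establishing (or extracting from the literature and adapting) the existence, uniqueness and ergodicity of $\nu_\phi$, its equivalence to Lebesgue measure on $I_\beta$ --- which is what makes the Fubini transfer produce a Lebesgue-full set of good $x$ --- and the weak continuity of $\phi\mapsto\nu_\phi$ needed for the intermediate value step. A self-contained alternative is to replace the i.i.d.\ choices by an adaptive rule (inside $S$, emit whichever digit pushes the running frequency of zeros back toward $p$) and prove convergence of the frequency directly; then the obstacle migrates to a quantitative-recurrence estimate showing that a.e.\ orbit of this non-i.i.d.\ random transformation returns to $S$ with finite mean gap, so that the long runs of forced digits triggered by each choice cannot prevent the frequency from settling at $p$.
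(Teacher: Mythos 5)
Your reduction to ``Lebesgue-a.e.\ $x$ has at least one expansion with zero-frequency $p$'' via Theorem \ref{main1}, and your identification of the endpoints $\frac12\pm c$ as the almost-sure zero-frequencies of the greedy and lazy expansions, are both correct and match the role Theorem \ref{main1} plays in the paper (the identity $\frac{\beta-1}{(m+1)\beta-2m}=\frac12+c$ does check out). Where you diverge is the interpolation. You randomize the choice in the switch region $[\frac1\beta,\frac1{\beta(\beta-1)}]$ with a Bernoulli($\phi$) coin and run an intermediate-value argument in $\phi$, quoting the ergodic theory of random $\beta$-expansions (existence, uniqueness, ergodicity, equivalence to Lebesgue of $\nu_\phi$). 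The paper interpolates deterministically: it takes the one-parameter family of piecewise linear maps with switch point $\frac{b+1}{\beta}$, $b\in[0,\frac1{\beta-1}-1]$ (greedy at $b=0$, lazy at the other endpoint), for which Kopf's theorem gives the explicit invariant density $\sum_{n\ge0}\beta^{-n}\bigl(\mathbbm{1}_{[0,T^n(b+1)]}-\mathbbm{1}_{[0,T^n(b)]}\bigr)$; the pseudo-golden relation forces $T^n(b)=T^n(b+1)$ for $n\ge m$, so the sum is finite and the zero-frequency is the explicit affine function $f(b)=\frac{(\beta-1)(1-(m-1)b)}{m\beta+\beta-2m}$, and the ``intermediate value'' step is just solving $f(b)=p$. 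The deterministic family buys explicit computability and needs only a mild extension step (density $\ge1$ on $[b,b+1]$, then prepending $0$'s or $1$'s as in Lemma \ref{lemma}); your random family, if its inputs are granted, gives equivalence to Lebesgue on all of $I_\beta$ at once.

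The substantive gap in your version is the continuity of $\phi\mapsto g(\phi)=\nu_\phi\bigl([0,\tfrac1\beta)\bigr)+(1-\phi)\nu_\phi(S)$, together with the attachment of the degenerate endpoints $\phi\in\{0,1\}$. Existence, uniqueness, ergodicity and equivalence of $\nu_\phi$ for $\phi\in(0,1)$ are indeed in the literature, but weak continuity of $\phi\mapsto\nu_\phi$ is not an off-the-shelf statement, and your justification (``the invariant densities being given by explicit piecewise-constant formulas'') is precisely what needs proof: for general $\phi$ the density is known only as an infinite series, and the fact that for pseudo-golden $\beta$ it collapses to a piecewise-constant function with weights depending continuously on $\phi$ requires a Markov/matching argument for the orbits of the switch-region endpoints under the random dynamics --- the exact analogue of the paper's step $T^n(b)=T^n(b+1)$ for $n\ge m$, which you have not supplied. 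So either prove that matching property for the random map (and then deduce continuity of $g$ and handle the endpoint cases separately, as you already do via Parry's measure and reflection), or --- more economically --- replace the coin by the deterministic switch-point family above, which removes the continuity issue entirely because the frequency becomes an explicit continuous function of the parameter; that is the paper's route.
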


We give some notations and preliminaries in the next section, prove the main results in Section 3 and end this paper with further questions in the last section.

\section{notations and preliminaries}

Let $\beta>1$. We define the maps $T_k(x):=\beta x-k$ for $x\in\R$ and $k\in\N\cup\{0\}$. Given $x\in I_\beta$, let
$$\Sigma_\beta(x):=\Big\{(\epsilon_i)_{i\ge1}\in\cA_\beta^\N:\sum_{i=1}^\infty\frac{\epsilon_i}{\beta^i}=x\Big\}$$
and
$$\Omega_\beta(x):=\Big\{(a_i)_{i\ge1}\in\{T_k,k\in\cA_\beta\}^\N:(a_n\circ\cdots\circ a_1)(x)\in I_\beta\text{ for all }n\in\N\Big\}.$$
The following lemma given by Baker is a dynamical interpretation of $\beta$-expansions.

\begin{lemma}[\cite{B14,B18}]\label{bijection}
For any $x\in I_\beta$, we have $\sharp\Sigma_\beta(x)=\sharp\Omega_\beta(x)$. Moreover, the map which sends $(\epsilon_i)_{i\ge1}$ to $(T_{\epsilon_i})_{i\ge1}$ is a bijection between $\Sigma_\beta(x)$ and $\Omega_\beta(x)$.
\end{lemma}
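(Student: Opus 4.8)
The plan is to separate the purely combinatorial content of the lemma from its arithmetic content. First I would observe that the assignment $k\mapsto T_k$ is injective on $\cA_\beta$ --- distinct digits give distinct affine maps $y\mapsto\beta y-k$ --- and hence a bijection onto its finite image $\{T_k:k\in\cA_\beta\}$. Consequently the map $\Phi\colon(\epsilon_i)_{i\ge1}\mapsto(T_{\epsilon_i})_{i\ge1}$ is automatically a bijection from $\cA_\beta^\N$ onto $\{T_k:k\in\cA_\beta\}^\N$, and both assertions will follow at once as soon as I show that $\Phi$ carries $\Sigma_\beta(x)$ exactly onto $\Omega_\beta(x)$; equivalently, that for a fixed string $(\epsilon_i)_{i\ge1}\in\cA_\beta^\N$ one has $(\epsilon_i)_{i\ge1}\in\Sigma_\beta(x)$ if and only if $(T_{\epsilon_i})_{i\ge1}\in\Omega_\beta(x)$.

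The key computation is an explicit formula for the orbit. Writing $x_n:=(T_{\epsilon_n}\circ\cdots\circ T_{\epsilon_1})(x)$ and using $T_k(y)=\beta y-k$, a one-line induction gives
$$x_n=\beta^n x-\sum_{i=1}^n\epsilon_i\beta^{\,n-i}=\beta^n\Big(x-\sum_{i=1}^n\frac{\epsilon_i}{\beta^i}\Big),$$
equivalently $x-\sum_{i=1}^n\epsilon_i\beta^{-i}=x_n/\beta^n$. This single identity is the bridge between the two descriptions: membership in $\Omega_\beta(x)$ concerns the left-hand orbit $x_n$, while membership in $\Sigma_\beta(x)$ concerns the partial sums on the right.

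For the forward direction, suppose $(\epsilon_i)_{i\ge1}\in\Sigma_\beta(x)$, so that $x=\sum_{i\ge1}\epsilon_i\beta^{-i}$. Then the identity rewrites $x_n$ as the tail $\sum_{j\ge1}\epsilon_{n+j}\beta^{-j}$, which is itself a $\beta$-expansion of $x_n$; since a real number admits a $\beta$-expansion precisely when it lies in $I_\beta$ (as recorded in the introduction), we get $x_n\in I_\beta$ for every $n$, i.e. $(T_{\epsilon_i})_{i\ge1}\in\Omega_\beta(x)$. For the converse, suppose $(T_{\epsilon_i})_{i\ge1}\in\Omega_\beta(x)$, so $x_n\in I_\beta$ for all $n$. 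Because $I_\beta$ is a bounded interval, $(x_n)_{n\ge1}$ is uniformly bounded, and since $\beta>1$ we have $x_n/\beta^n\to0$; letting $n\to\infty$ in $x-\sum_{i=1}^n\epsilon_i\beta^{-i}=x_n/\beta^n$ yields $x=\sum_{i\ge1}\epsilon_i\beta^{-i}$, that is $(\epsilon_i)_{i\ge1}\in\Sigma_\beta(x)$.

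I do not expect a genuine obstacle: the whole argument collapses to the orbit identity together with the boundedness of $I_\beta$. The only points demanding a little care are verifying the induction for $x_n$ cleanly and, in the converse direction, justifying the passage to the limit from the uniform bound $|x_n|\le\lfloor\beta\rfloor/(\beta-1)$ combined with $\beta^{-n}\to0$. The cardinality equality $\sharp\Sigma_\beta(x)=\sharp\Omega_\beta(x)$ is then immediate, since $\Phi$ restricts to a bijection $\Sigma_\beta(x)\to\Omega_\beta(x)$.
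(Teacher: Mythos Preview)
Your proof is correct and is essentially the standard argument: the orbit identity $x_n=\beta^n\big(x-\sum_{i=1}^n\epsilon_i\beta^{-i}\big)$ together with the boundedness of $I_\beta$ gives both directions cleanly, and the bijectivity of $\Phi$ on the ambient sequence spaces handles the rest. Note, however, that the paper does not actually supply its own proof of this lemma; it is quoted from Baker's work \cite{B14,B18} and stated without argument, so there is no in-paper proof to compare against. Your write-up would serve perfectly well as a self-contained justification.
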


We need the following concepts and the well known Birkhoff's Ergodic Theorem in the proof of our main results.

\begin{definition}[Absolute continuity and equivalence]
Let $\mu$ and $\nu$ be measures on a measurable space $(X,\cF)$. We say that $\mu$ is \textit{absolutely continuous} with respect to $\nu$ and denote it by $\mu \ll \nu$ if, for any $A \in\cF$, $\nu(A) = 0$ implies $\mu(A) = 0$. Moreover, if $\mu\ll\nu$ and $\nu\ll\mu$ we say that $\mu$ and $\nu$ are \textit{equivalent} and denote this property by $\mu\sim\nu$.
\end{definition}

\begin{theorem}[\cite{W82} Birkhoff's Ergodic Theorem]
Let $(X,\cF,\mu,T)$ be a measure-preserving dynamical system where the probability measure $\mu$ is ergodic with respect to $T$. Then for any real-valued integrable function $f:X\to\R$, we have
$$\lim_{n\to\infty}\frac{1}{n}\sum_{k=0}^{n-1}f(T^kx)=\int f d\mu$$
for $\mu$-a.e. (almost every) $x\in X$.
\end{theorem}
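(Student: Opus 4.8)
The plan is to follow the classical route through the Maximal Ergodic Theorem, first establishing almost everywhere convergence of the Birkhoff averages and then identifying the limit by ergodicity. Write $S_nf=\sum_{k=0}^{n-1}f\circ T^k$ (with $S_0f=0$) and $A_nf=\frac1n S_nf$, so that the claim is $A_nf\to\int f\,d\mu$ for $\mu$-a.e.\ $x$. By linearity of the averaging operator it suffices to treat a single integrable $f$, and throughout I will use that $\mu$ is $T$-invariant, so $\int_X g\circ T\,d\mu=\int_X g\,d\mu$ for every $g\in L^1(\mu)$.

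The first and principal step is the Maximal Ergodic Theorem: for $f\in L^1(\mu)$ and $E=\{x\in X:\sup_{n\ge1}S_nf(x)>0\}$, one has $\int_E f\,d\mu\ge0$. I would prove this by Garsia's argument. Put $M_Nf=\max_{0\le n\le N}S_nf\ge S_0f=0$; the recursion $S_nf=f+S_{n-1}f\circ T$ gives, for $1\le n\le N$, the bound $S_nf\le f+M_Nf\circ T$, since $S_{n-1}f\le M_Nf$. Hence on $E_N=\{M_Nf>0\}$ (where the maximum defining $M_Nf$ is attained at some index $n\ge1$) we have $M_Nf\le f+M_Nf\circ T$, that is $f\ge M_Nf-M_Nf\circ T$. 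Integrating over $E_N$, and using that $M_Nf\ge0$ vanishes off $E_N$ together with invariance of $\mu$, yields $\int_{E_N}f\,d\mu\ge\int_X M_Nf\,d\mu-\int_X M_Nf\circ T\,d\mu=0$. Letting $N\to\infty$, so that $E_N\uparrow E$, and dominating by $|f|$, produces $\int_E f\,d\mu\ge0$.

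With the maximal inequality in hand I would show that $\overline f:=\varlimsup_n A_nf$ and $\underline f:=\varliminf_n A_nf$ coincide a.e. Both are $T$-invariant, because $A_nf\circ T=\frac{n+1}{n}A_{n+1}f-\frac1n f$. For rationals $a<b$ the set $E_{a,b}=\{\underline f<a<b<\overline f\}$ is $T$-invariant; applying the maximal theorem to $f-b$ restricted to the invariant subsystem $E_{a,b}$ (on which $\sup_nS_n(f-b)>0$ everywhere, as $\overline f>b$ there) gives $\int_{E_{a,b}}f\,d\mu\ge b\,\mu(E_{a,b})$, while applying it to $a-f$ gives $\int_{E_{a,b}}f\,d\mu\le a\,\mu(E_{a,b})$. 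Since $a<b$, this forces $\mu(E_{a,b})=0$, and the countable union over rational pairs shows $\overline f=\underline f$ a.e.; call the common value $\hat f$, which is finite a.e.\ by Fatou's lemma since $\int|\hat f|\,d\mu\le\int|f|\,d\mu<\infty$.

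Finally I would identify $\hat f$. It is $T$-invariant, so ergodicity forces it to be constant $\mu$-a.e. To pin down the constant I would prove $\int\hat f\,d\mu=\int f\,d\mu$: as $\int A_nf\,d\mu=\int f\,d\mu$ for every $n$ by invariance of $\mu$, it remains to pass the limit through the integral, which I would justify by a uniform-integrability argument, approximating $f$ in $L^1$ by bounded functions and controlling the tail with the maximal inequality (equivalently, invoking $L^1$-convergence of the averages). Then $\hat f=\int f\,d\mu$ a.e., which is the assertion. The main obstacle is the Maximal Ergodic Theorem of the second step, namely the telescoping inequality combined with the measure-preservation bookkeeping; the limit interchange in the last step is a routine uniform-integrability technicality.
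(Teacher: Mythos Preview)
The paper does not prove this statement at all: Birkhoff's Ergodic Theorem is quoted in the preliminaries section with a citation to Walters \cite{W82}, and is then used as a black box in the proofs of Lemma \ref{lemma} and Theorem \ref{main3}. So there is no ``paper's own proof'' to compare against.

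Your outline is the standard proof via Garsia's Maximal Ergodic Theorem and is essentially correct. The telescoping argument for $\int_{E_N}f\,d\mu\ge0$ is fine; the reduction to the invariant sets $E_{a,b}$ and the two applications of the maximal inequality to $f-b$ and $a-f$ are carried out properly (the key point that $E_{a,b}$ is $T$-invariant so that one may work in the subsystem is noted). The only place that remains a sketch is the final $L^1$ passage $\int A_nf\,d\mu\to\int\hat f\,d\mu$: your uniform-integrability remark is on target, and the usual way to make it precise is to approximate $f$ by a bounded $g$, use dominated convergence for $A_ng$, and bound $\|A_n(f-g)\|_1\le\|f-g\|_1$ together with the already-proved a.e.\ convergence to control $\|\hat f-\hat g\|_1$. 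With that detail filled in, your argument is complete; it is simply more than the paper itself supplies.
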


\section{Proof of the main results}

\begin{proof}[Proof of Theorem \ref{main1}] The ``if'' part is obvious. We only need to prove the ``only if'' part. Let $\cL$ be the Lebesgue measure. Suppose that $\cL$-a.e. $x\in I_\beta$ has a $\beta$-expansion of frequency $(\overline{p},\underline{p})$. Let
$$\cU_\beta:=\Big\{x\in I_\beta: x\text{ has a unique }\beta\text{-expansion}\Big\}$$
and
$$\cN_\beta^{\overline{p},\underline{p}}:=\Big\{x\in I_\beta:x\text{ has no }\beta\text{-expansions of frequency }(\overline{p},\underline{p})\Big\}.$$
On the one hand, it is well known that $\cL(\cU_\beta)=0$ (see for examples \cite{DK09,KL14}). On the other hand, by condition we know $\cL(\cN_\beta^{\overline{p},\underline{p}})=0$. Let
$$\Psi:=\Big(\cU_\beta\cup\cN_\beta^{\overline{p},\underline{p}}\Big)\cup\bigcup_{n=1}^\infty\bigcup_{\epsilon_1,\cdots,\epsilon_n\in\cA_\beta}T_{\epsilon_n}^{-1}\circ\cdots\circ T_{\epsilon_1}^{-1}\Big(\cU_\beta\cup\cN_\beta^{\overline{p},\underline{p}}\Big).$$
Then $\cL(\Psi)=0$. Let $x\in I_\beta\setminus\Psi$. It suffices to prove that $x$ has infinitely many different $\beta$-expansions of frequency $(\overline{p},\underline{p})$.

Let $(\epsilon_i)_{i\ge1}$ be a $\beta$-expansions of $x$. Since $x\notin\Psi$ implies $x\notin\cU_\beta$, $x$ has another $\beta$-expansion $(w^{(1)}_i)_{i\ge1}$. There exists $n_1\in\N$ such that $w^{(1)}_1\cdots w^{(1)}_{n_1-1}=\epsilon_1\cdots\epsilon_{n_1-1}$ and $w^{(1)}_{n_1}\neq\epsilon_{n_1}$. By
$$T_{w^{(1)}_{n_1}}\circ T_{\epsilon_{n_1}-1}\circ\cdots\circ T_{\epsilon_1}x=T_{w^{(1)}_{n_1}}\circ\cdots\circ T_{w^{(1)}_1}x=\sum_{i=1}^\infty\frac{w^{(1)}_{n_1+i}}{\beta^i},$$
we know that $(w^{(1)}_{n_1+i})_{i\ge1}$ is a $\beta$-expansion of $T_{w^{(1)}_{n_1}}\circ T_{\epsilon_{n_1}-1}\circ\cdots\circ T_{\epsilon_1}x$. Since $x\notin\Psi$ implies $T_{w^{(1)}_{n_1}}\circ T_{\epsilon_{n_1}-1}\circ\cdots\circ T_{\epsilon_1}x\notin\cN_\beta^{\overline{p},\underline{p}}$, $T_{w^{(1)}_{n_1}}\circ T_{\epsilon_{n_1}-1}\circ\cdots\circ T_{\epsilon_1}x$ has a $\beta$-expansion $(\epsilon^{(1)}_{n_1+i})_{i\ge1}$ of frequency $(\overline{p},\underline{p})$. Let $\epsilon^{(1)}_1\cdots\epsilon^{(1)}_{n_1-1}\epsilon^{(1)}_{n_1}:=\epsilon_1\cdots\epsilon_{n_1-1}w^{(1)}_{n_1}$. Then $(\epsilon^{(1)}_i)_{i\ge1}$ is a $\beta$-expansion of $x$ of frequency $(\overline{p},\underline{p})$ with $\epsilon^{(1)}_{n_1}\neq\epsilon_{n_1}$, which implies that $(\epsilon_i)_{i\ge1}$ and $(\epsilon^{(1)}_i)_{i\ge1}$ are different.

Note that $(\epsilon_{n_1+i})_{i\ge1}$ is a $\beta$-expansion of $T_{\epsilon_{n_1}}\circ\cdots\circ T_{\epsilon_1}x$. Since $x\notin\Psi$ implies $T_{\epsilon_{n_1}}\circ\cdots\circ T_{\epsilon_1}x\notin\cU_\beta$, $T_{\epsilon_{n_1}}\circ\cdots\circ T_{\epsilon_1}x$ has another $\beta$-expansion $(w^{(2)}_{n_1+i})_{i\ge1}$. There exists $n_2>n_1$ such that $w^{(2)}_{n_1+1}\cdots w^{(2)}_{n_2-1}=\epsilon_{n_1+1}\cdots\epsilon_{n_2-1}$ and $w^{(2)}_{n_2}\neq\epsilon_{n_2}$. By
$$T_{w^{(2)}_{n_2}}\circ T_{\epsilon_{n_2}-1}\circ\cdots\circ T_{\epsilon_1}x=T_{w^{(2)}_{n_2}}\circ\cdots\circ T_{w^{(2)}_{n_1+1}}\circ(T_{\epsilon_{n_1}}\circ\cdots\circ T_{\epsilon_1}x)=\sum_{i=1}^\infty\frac{w^{(2)}_{n_2+i}}{\beta^i},$$
we know that $(w^{(2)}_{n_2+i})_{i\ge1}$ is a $\beta$-expansion of $T_{w^{(2)}_{n_2}}\circ T_{\epsilon_{n_2}-1}\circ\cdots\circ T_{\epsilon_1}x$. Since $x\notin\Psi$ implies $T_{w^{(2)}_{n_2}}\circ T_{\epsilon_{n_2}-1}\circ\cdots\circ T_{\epsilon_1}x\notin\cN_\beta^{\overline{p},\underline{p}}$, $T_{w^{(2)}_{n_2}}\circ T_{\epsilon_{n_2}-1}\circ\cdots\circ T_{\epsilon_1}x$ has a $\beta$-expansion $(\epsilon^{(2)}_{n_2+i})_{i\ge1}$ of frequency $(\overline{p},\underline{p})$. Let $\epsilon^{(2)}_1\cdots\epsilon^{(2)}_{n_2-1}\epsilon^{(2)}_{n_2}:=\epsilon_1\cdots\epsilon_{n_2-1}w^{(2)}_{n_2}$. Then $(\epsilon^{(2)}_i)_{i\ge1}$ is a $\beta$-expansion of $x$ of frequency $(\overline{p},\underline{p})$ with $\epsilon^{(2)}_{n_1}=\epsilon_{n_1}$ and $\epsilon^{(2)}_{n_2}\neq\epsilon_{n_2}$, which implies that $(\epsilon_i)_{i\ge1}$, $(\epsilon^{(1)}_i)_{i\ge1}$ and $(\epsilon^{(2)}_i)_{i\ge1}$ are all different.

$\cdots$

Generally, suppose that for some $j\in\N$ we have already constructed $(\epsilon^{(1)}_i)_{i\ge1}$, $(\epsilon^{(2)}_i)_{i\ge1}$, $\cdots$, $(\epsilon^{(j)}_i)_{i\ge1}$, which are all $\beta$-expansions of $x$ of frequency $(\overline{p},\underline{p})$ such that
$$\left\{\begin{array}{l}
\epsilon^{(1)}_{n_1}\neq\epsilon_{n_1},\\
\epsilon^{(2)}_{n_1}=\epsilon_{n_1},\epsilon^{(2)}_{n_2}\neq\epsilon_{n_2},\\
\epsilon^{(3)}_{n_1}=\epsilon_{n_1},\epsilon^{(3)}_{n_2}=\epsilon_{n_2},\epsilon^{(3)}_{n_3}\neq\epsilon_{n_3},\\
\cdots\\
\epsilon^{(j)}_{n_1}=\epsilon_{n_1},\epsilon^{(j)}_{n_2}=\epsilon_{n_2},\cdots,\epsilon^{(j)}_{n_{j-1}}\neq\epsilon_{n_{j-1}},\epsilon^{(j)}_{n_{j}}\neq\epsilon_{n_{j}}.
\end{array}\right.$$
Note that $(\epsilon_{n_j+i})_{i\ge1}$ is a $\beta$-expansion of $T_{\epsilon_{n_j}}\circ\cdots\circ T_{\epsilon_1}x$. Since $x\notin\Psi$ implies $T_{\epsilon_{n_j}}\circ\cdots\circ T_{\epsilon_1}x\notin\cU_\beta$, $T_{\epsilon_{n_j}}\circ\cdots\circ T_{\epsilon_1}x$ has another $\beta$-expansion $(w^{(j+1)}_{n_j+i})_{i\ge1}$. There exists $n_{j+1}>n_j$ such that $w^{(j+1)}_{n_j+1}\cdots w^{(j+1)}_{n_{j+1}-1}=\epsilon_{n_j+1}\cdots\epsilon_{n_{j+1}-1}$ and $w^{(j+1)}_{n_{j+1}}\neq\epsilon_{n_{j+1}}$. By
$$T_{w^{(j+1)}_{n_{j+1}}}\circ T_{\epsilon_{n_{j+1}}-1}\circ\cdots\circ T_{\epsilon_1}x=T_{w^{(j+1)}_{n_{j+1}}}\circ\cdots\circ T_{w^{(j+1)}_{n_j+1}}\circ(T_{\epsilon_{n_j}}\circ\cdots\circ T_{\epsilon_1}x)=\sum_{i=1}^\infty\frac{w^{(j+1)}_{n_{j+1}+i}}{\beta^i},$$
we know that $(w^{(j+1)}_{n_{j+1}+i})_{i\ge1}$ is a $\beta$-expansion of $T_{w^{(j+1)}_{n_{j+1}}}\circ T_{\epsilon_{n_{j+1}}-1}\circ\cdots\circ T_{\epsilon_1}x$. Since $x\notin\Psi$ implies $T_{w^{(j+1)}_{n_{j+1}}}\circ T_{\epsilon_{n_{j+1}}-1}\circ\cdots\circ T_{\epsilon_1}x\notin\cN_\beta^{\overline{p},\underline{p}}$, $T_{w^{(j+1)}_{n_{j+1}}}\circ T_{\epsilon_{n_{j+1}}-1}\circ\cdots\circ T_{\epsilon_1}x$ has a $\beta$-expansion $(\epsilon^{(j+1)}_{n_{j+1}+i})_{i\ge1}$ of frequency $(\overline{p},\underline{p})$. Let $\epsilon^{(j+1)}_1\cdots\epsilon^{(j+1)}_{n_{j+1}-1}\epsilon^{(j+1)}_{n_{j+1}}:=\epsilon_1\cdots\epsilon_{n_{j+1}-1}w^{(j+1)}_{n_{j+1}}$. Then $(\epsilon^{(j+1)}_i)_{i\ge1}$ is a $\beta$-expansion of $x$ of frequency $(\overline{p},\underline{p})$ with $\epsilon^{(j+1)}_{n_1}=\epsilon_{n_1},\cdots,\epsilon^{(j+1)}_{n_j}=\epsilon_{n_j}$ and $\epsilon^{(j+1)}_{n_{j+1}}\neq\epsilon_{n_{j+1}}$, which implies that $(\epsilon_i)_{i\ge1}$, $(\epsilon^{(1)}_i)_{i\ge1}$, $\cdots$, $(\epsilon^{(j+1)}_i)_{i\ge1}$ are all different.

$\cdots$

It follows from repeating the above process that $x$ has infinitely many different $\beta$-expansions of frequency $(\overline{p},\underline{p})$.
\end{proof}

Theorem \ref{main2} follows immediately from Theorem \ref{main1} and the following lemma.

\begin{lemma}\label{lemma}
For all $\beta>1$, Lebesgue almost every $x\in I_\beta$ has a balanced $\beta$-expansion.
\end{lemma}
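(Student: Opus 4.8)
The plan is to obtain, for Lebesgue-almost every $x$, a balanced $\beta$-expansion by reading the selected digits along a generic orbit of the \emph{random $\beta$-transformation}, the whole argument turning on the symmetry of the alphabet under $k\mapsto\lfloor\beta\rfloor-k$. Write $m:=\lfloor\beta\rfloor$. The case $\beta\in\N$ (where $I_\beta=[0,1]$) is immediate from Borel's normal number theorem, since a base-$\beta$ normal expansion has every digit of frequency $\tfrac1\beta$ and is therefore balanced; so from now on assume $\beta\notin\N$.

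Recall the random $\beta$-transformation $K=K_\beta$ on the state space $X:=\{0,1,\dots,m\}^{\N}\times I_\beta$, whose first coordinate records the still-unused random symbols: at a point $(\omega,y)$ the admissible digits $\{k\in\cA_\beta:T_k(y)\in I_\beta\}$ form a block of consecutive integers, one of them is selected according to $\omega_1$ (deterministically when the block is a singleton), the used symbol $\omega_1$ is discarded, and $K(\omega,y)$ is obtained by applying the chosen $T_k$ to $y$. Iterating $K$ and recording the selected digits produces a $\beta$-expansion of $y$, and every $\beta$-expansion of $y$ arises from a suitable $\omega$; writing $f_k(\omega,y):=\mathbf 1[\text{the digit selected at }(\omega,y)\text{ is }k]$, the frequency of $k$ in that expansion is $\lim_n\frac1n\sum_{j<n}f_k(K^j(\omega,y))$ whenever this Birkhoff average exists. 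The key structural input, supplied by the theory of random $\beta$-expansions (Dajani--Kraaikamp for $\beta\in(1,2)$, Dajani--de Vries for the general non-integer case), is that $K$ admits a \emph{unique} invariant probability measure $\mu$ absolutely continuous with respect to the product of a fully supported Bernoulli measure and $\cL|_{I_\beta}$, that $\mu$ is ergodic, and that its marginal $\mu_{I_\beta}$ on $I_\beta$ is equivalent to $\cL|_{I_\beta}$.

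Now exploit the reflection. Let $\rho(y):=\tfrac{m}{\beta-1}-y$, an involution of $I_\beta$, and observe the identity $T_k\circ\rho=\rho\circ T_{m-k}$; consequently the admissible digit block at $\rho(y)$ is the mirror image (under $k\mapsto m-k$) of the block at $y$. Hence the map $R(\omega,y):=\bigl((m-\omega_i)_{i\ge1},\,\rho(y)\bigr)$ is an involution of $X$ that commutes with $K$, preserves the reference measure class, and satisfies $f_k\circ R=f_{m-k}$. By uniqueness of the absolutely continuous invariant measure, $R_*\mu=\mu$, so $\int f_k\,d\mu=\int f_{m-k}\,d\mu$ for every $k\in\{0,\dots,m\}$. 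By Birkhoff's Ergodic Theorem applied to $(X,\mu,K)$, for $\mu$-almost every $(\omega,y)$ the resulting $\beta$-expansion $(\epsilon_i)_{i\ge1}$ of $y$ satisfies $\text{Freq}_k(\epsilon_i)=\int f_k\,d\mu=\int f_{m-k}\,d\mu=\text{Freq}_{m-k}(\epsilon_i)$ for all $k$, i.e.\ it is balanced. This good set has full $\mu$-measure, hence its projection to $I_\beta$ has full $\mu_{I_\beta}$-measure, and since $\mu_{I_\beta}\sim\cL|_{I_\beta}$ we conclude that $\cL$-almost every $x\in I_\beta$ has a balanced $\beta$-expansion. (All sets here are measurable in the relevant sense: the set of balanced sequences is Borel in $\cA_\beta^{\N}$ and $(\epsilon_i)\mapsto\sum_i\epsilon_i\beta^{-i}$ is continuous, so projections are analytic, hence universally measurable.)

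The crux, and the only ingredient that is not soft, is this cited input: the existence, uniqueness, ergodicity, and — most importantly — the Lebesgue-equivalence of the $I_\beta$-marginal of the invariant measure of the random $\beta$-transformation, valid for every $\beta\in(1,\infty)\setminus\N$; one should pin down the reference that covers the full range and check that the randomization is set up symmetrically (as it is in the standard construction), so that $K$ genuinely commutes with $R$. Everything else is routine — the equivariance of $R$ is a one-line computation from $T_k\circ\rho=\rho\circ T_{m-k}$. If one prefers to rely only on the weaker fact $\mu_{I_\beta}\ll\cL$, the same Birkhoff argument still shows that $B:=\{x\in I_\beta:\ x\text{ has a balanced }\beta\text{-expansion}\}$ has positive Lebesgue measure; since prepending a digit leaves all frequencies unchanged, $I_\beta\setminus B$ is forward invariant under the greedy $\beta$-transformation on $I_\beta$, which is ergodic with respect to a measure equivalent to $\cL$, whence $\cL(I_\beta\setminus B)\in\{0,\cL(I_\beta)\}$ and therefore $\cL(I_\beta\setminus B)=0$.
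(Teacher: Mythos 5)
Your proposal is correct in substance, but it takes a genuinely different route from the paper. The paper argues case by case: $\beta\in\N$ via Borel's theorem, $\beta\in(1,2)$ by citing Baker--Kong, and for non-integer $\beta>2$ it builds an explicit \emph{deterministic} expansion: symmetric cut points $z_1<\dots<z_{\lfloor\beta\rfloor}$ define a piecewise linear map $T$ on $I_\beta$, Wilkinson's theorem gives an ergodic $T$-invariant measure equivalent to $\cL$ on the middle interval $[z_0,z_{\lceil\beta\rceil})$, the pointwise symmetry of $T$-orbits under $x\mapsto\frac{\lfloor\beta\rfloor}{\beta-1}-x$ plus Birkhoff yields $\mu((z_k,z_{k+1}))=\mu((z_{\lfloor\beta\rfloor-k},z_{\lceil\beta\rceil-k}))$, hence the itinerary expansion is balanced a.e.\ there, and the result is spread to the rest of $I_\beta$ by prepending blocks of $0$'s or $\lfloor\beta\rfloor$'s. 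You instead push the symmetry through the random $\beta$-transformation: the identity $T_k\circ\rho=\rho\circ T_{\lfloor\beta\rfloor-k}$ (which does check out), the induced involution $R$, and uniqueness of the absolutely continuous invariant measure force $R_*\mu=\mu$, so $\int f_k\,d\mu=\int f_{\lfloor\beta\rfloor-k}\,d\mu$ and Birkhoff gives balanced expansions $\mu$-a.e., with Lebesgue-equivalence of the marginal finishing the argument. What your route buys: a single uniform treatment of all non-integer $\beta>1$, so the citation of Baker--Kong for $(1,2)$ becomes unnecessary; what it costs: heavier cited machinery (existence, uniqueness, ergodicity, and Lebesgue-equivalence of the marginal of the invariant measure for the random $\beta$-transformation, i.e.\ Dajani--Kraaikamp and Dajani--de Vries), plus the set-up care you yourself flag -- the switch regions carry exactly two admissible digits, so the randomizing alphabet should be $\{0,1\}$ with the mirror $\omega_i\mapsto 1-\omega_i$ (not $\{0,\dots,\lfloor\beta\rfloor\}$ as written), and the Bernoulli parameter must be $\tfrac12$ so that $R$ preserves the reference measure class and uniqueness applies. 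One further loose end, harmless because your main line does not use it: in the fallback you invoke ``the greedy $\beta$-transformation on $I_\beta$, ergodic with respect to a measure equivalent to $\cL$,'' but that measure lives on $[0,1]$, not on all of $I_\beta$; to make the fallback rigorous one must, as the paper does for its middle interval, first send a.e.\ point of $I_\beta\setminus[0,1]$ into $[0,1]$ and prepend the corresponding digits, and then use that forward invariance together with invariance of the measure still yields the zero-one law. These are refinements of detail, not gaps in the argument.
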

\begin{proof} The conclusion follows from the well known Borel's Normal Number Theorem \cite{B09} if $\beta\in\N$ and follows from \cite[Theorem 4.1]{BK18} if $\beta\in(1,2)$. Thus we only need to consider $\beta>2$ with $\beta\notin\N$ in the following. Let
$$z_1:=\frac{1}{2}\big(\frac{\lfloor\beta\rfloor}{\beta-1}-\frac{\lfloor\beta\rfloor-1}{\beta}\big)\quad\text{and}\quad z_{k+1}:=z_k+\frac{1}{\beta}\quad\text{for all }k\in\{1,2,\cdots,\lfloor\beta\rfloor-1\}.$$
Define $T: I_\beta\to I_\beta$ by
$$T(x):=\left\{\begin{array}{ll}
T_0(x)=\beta x & \text{for } x\in[0,z_1),\\
T_k(x)=\beta x-k & \text{for } x\in[z_k,z_{k+1})\text{ and }k\in\{1,2,\cdots,\lfloor\beta\rfloor-1\},\\
T_{\lfloor\beta\rfloor}(x)=\beta x-\lfloor\beta\rfloor & \text{for } x\in[z_{\lfloor\beta\rfloor},\frac{\lfloor\beta\rfloor}{\beta-1}].
\end{array}\right.$$
Let
$$z_0:=\frac{\lfloor\beta\rfloor}{2(\beta-1)}-\frac{1}{2}\quad\text{and}\quad z_{\lceil\beta\rceil}:=z_0+1=\frac{\lfloor\beta\rfloor}{2(\beta-1)}+\frac{1}{2}.$$
Then $T_1(z_1)=T_2(z_2)=\cdots=T_{\lfloor\beta\rfloor}(z_{\lfloor\beta\rfloor})=z_0$ and $T_0(z_1)=T_1(z_2)=\cdots=T_{\lfloor\beta\rfloor-1}(z_{\lfloor\beta\rfloor})=z_{\lceil\beta\rceil}$.

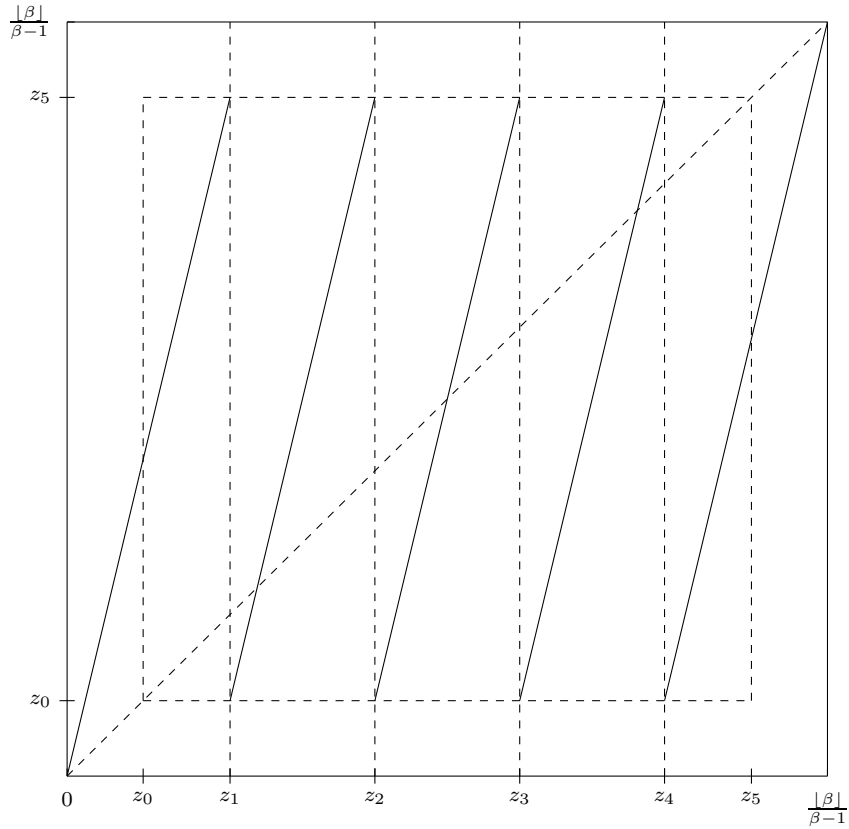
\begin{figure}[H]\label{T}
\begin{tikzpicture}
\draw[-](0,0)--(10,0);
\draw[-](0,0)--(0,10);
\draw[-](0,10)--(10,10);
\draw[-](10,0)--(10,10);
\draw[-](0,0)--(15/7,9);
\draw[-](15/7,1)--(85/21,9);
\draw[-](85/21,1)--(125/21,9);
\draw[-](125/21,1)--(55/7,9);
\draw[-](55/7,1)--(10,10);
\draw[dashed](15/7,0)--(15/7,10);
\draw[dashed](85/21,0)--(85/21,10);
\draw[dashed](125/21,0)--(125/21,10);
\draw[dashed](55/7,0)--(55/7,10);
\draw[dashed](0,0)--(10,10);
\draw[dashed](1,1)--(9,1);
\draw[dashed](1,1)--(1,9);
\draw[dashed](1,9)--(9,9);
\draw[dashed](9,1)--(9,9);
\foreach \x in {0}{\draw(\x,0)--(\x,0.1)node[below,outer sep=2pt,font=\tiny]at(\x,0){$0$};}
\foreach \x in {1}{\draw(\x,-0.1)--(\x,0.1)node[below,outer sep=2pt,font=\tiny]at(\x,0){$z_0$};}
\foreach \x in {15/7}{\draw(\x,-0.1)--(\x,0.1)node[below,outer sep=2pt,font=\tiny]at(\x,0){$z_1$};}
\foreach \x in {85/21}{\draw(\x,-0.1)--(\x,0.1)node[below,outer sep=2pt,font=\tiny]at(\x,0){$z_2$};}
\foreach \x in {125/21}{\draw(\x,-0.1)--(\x,0.1)node[below,outer sep=2pt,font=\tiny]at(\x,0){$z_3$};}
\foreach \x in {55/7}{\draw(\x,-0.1)--(\x,0.1)node[below,outer sep=2pt,font=\tiny]at(\x,0){$z_4$};}
\foreach \x in {9}{\draw(\x,-0.1)--(\x,0.1)node[below,outer sep=2pt,font=\tiny]at(\x,0){$z_5$};}
\foreach \x in {10}{\draw(\x,0)--(\x,0.1)node[below,outer sep=2pt,font=\tiny]at(\x,0){$\frac{\lfloor\beta\rfloor}{\beta-1}$};}
\foreach \y in {1}{\draw(-0.1,\y)--(0.1,\y)node[left,outer sep=2pt,font=\tiny]at(0,\y){$z_0$};}
\foreach \y in {9}{\draw(-0.1,\y)--(0.1,\y)node[left,outer sep=2pt,font=\tiny]at(0,\y){$z_5$};}
\foreach \y in {10}{\draw(0,\y)--(0.1,\y)node[left,outer sep=2pt,font=\tiny]at(0,\y){$\frac{\lfloor\beta\rfloor}{\beta-1}$};}
\end{tikzpicture}
\caption{The graph of $T$ for some $\beta\in(4,5)$.}
\end{figure}

We consider the restriction $T|_{[z_0,z_{\lceil\beta\rceil})}:[z_0,z_{\lceil\beta\rceil})\to[z_0,z_{\lceil\beta\rceil})$. By Theorem 5.2 in \cite{W75}, there exists a $T|_{[z_0,z_{\lceil\beta\rceil})}$-invariant ergodic Borel probability measure $\mu$ on $[z_0,z_{\lceil\beta\rceil})$ equivalent to the Lebesgue measure $\cL$. For any $x\in[z_0,z_{\lceil\beta\rceil})$ which is not a preimage of a discontinuity point of $T|_{[z_0,z_{\lceil\beta\rceil})}$, by symmetry, we know that for any $k\in\{0,1,\cdots,\lfloor\beta\rfloor\}$ and $i\in\{0,1,2,\cdots\}$,
$$T^i(x)\in(z_k,z_{k+1})\Leftrightarrow T^i\Big(\frac{\lfloor\beta\rfloor}{\beta-1}-x\Big)\in(z_{\lfloor\beta\rfloor-k},z_{\lceil\beta\rceil-k}).$$
For all $k\in\{0,1,\cdots,\lfloor\beta\rfloor\}$, it follows from Birkhoff's Ergodic Theorem that for $\cL$-a.e. $x\in[z_0,z_{\lceil\beta\rceil})$,
\begin{eqnarray}
\label{Birkhoff}\mu((z_k,z_{k+1}))=\int_{z_0}^{z_{\lceil\beta\rceil}}\mathbbm{1}_{(z_k,z_{k+1})}d\mu=\lim_{n\to\infty}\frac{1}{n}\sum_{i=0}^{n-1}\mathbbm{1}_{(z_k,z_{k+1})}\Big(T^i(x)\Big)\\
\label{symmetry}=\lim_{n\to\infty}\frac{1}{n}\sum_{i=0}^{n-1}\mathbbm{1}_{(z_{\lfloor\beta\rfloor-k},z_{\lceil\beta\rceil-k})}\Big(T^i\Big(\frac{\lfloor\beta\rfloor}{\beta-1}-x\Big)\Big)
\end{eqnarray}
and for $\cL$-a.e. $y\in[z_0,z_{\lceil\beta\rceil})$,
$$\mu((z_{\lfloor\beta\rfloor-k},z_{\lceil\beta\rceil-k}))=\int_{z_0}^{z_{\lceil\beta\rceil}}\mathbbm{1}_{(z_{\lfloor\beta\rfloor-k},z_{\lceil\beta\rceil-k})}d\mu=\lim_{n\to\infty}\frac{1}{n}\sum_{i=0}^{n-1}\mathbbm{1}_{(z_{\lfloor\beta\rfloor-k},z_{\lceil\beta\rceil-k})}\Big(T^i(y)\Big),$$
which implies that for $\cL$-a.e. $(\frac{\lfloor\beta\rfloor}{\beta-1}-x)\in(z_0,z_{\lceil\beta\rceil})$,
$$\mu((z_{\lfloor\beta\rfloor-k},z_{\lceil\beta\rceil-k}))=\lim_{n\to\infty}\frac{1}{n}\sum_{i=0}^{n-1}\mathbbm{1}_{(z_{\lfloor\beta\rfloor-k},z_{\lceil\beta\rceil-k})}\Big(T^i\Big(\frac{\lceil\beta\rceil}{\beta-1}-x\Big)\Big).$$
So this is also true for $\cL$-a.e $x\in(z_0,z_{\lceil\beta\rceil})$. Recall (\ref{symmetry}), we get
\begin{eqnarray}\label{=}
\mu((z_k,z_{k+1}))=\mu((z_{\lfloor\beta\rfloor-k},z_{\lceil\beta\rceil-k}))\quad\text{for }k\in\{0,1,\cdots,\lfloor\beta\rfloor\}.
\end{eqnarray}
For every $x\in I_\beta$, define a sequence $(\epsilon_i(x))_{i\ge1}\in\{0,1,\cdots,\lfloor\beta\rfloor\}^\N$ by
$$\epsilon_i(x):=\left\{\begin{array}{ll}
0 & \text{if } T^{i-1}x\in[0,z_1),\\
k & \text{if } T^{i-1}x\in[z_k,z_{k+1})\text{ for some }k\in\{1,2,\cdots,\lfloor\beta\rfloor-1\},\\
\lfloor\beta\rfloor & \text{if } T^{i-1}x\in[z_{\lfloor\beta\rfloor},\frac{\lfloor\beta\rfloor}{\beta-1}].
\end{array}\right.$$
Then for all $k\in\{0,1,\cdots,\lfloor\beta\rfloor\}$, $i\in\{0,1,2,\cdots\}$ and $x\in[z_0,z_{\lceil\beta\rceil})$,
$$\mathbbm{1}_{[z_k,z_{k+1})}(T^ix)=1\Leftrightarrow T^ix\in[z_k,z_{k+1})\Leftrightarrow\epsilon_{i+1}(x)=k.$$
By (\ref{Birkhoff}), we know that for all $k\in\{0,1,\cdots,\lfloor\beta\rfloor\}$ and $\cL$-a.e. $x\in[z_0,z_{\lceil\beta\rceil})$,
$$\text{Freq}_k(\epsilon_i(x))=\lim_{n\to\infty}\frac{\sharp\{1\le i\le n:\epsilon_i(x)=k\}}{n}=\mu((z_k,z_{k+1})).$$
It follows from (\ref{=}) that that for all $k\in\{0,1,\cdots,\lfloor\beta\rfloor\}$ and $\cL$-a.e. $x\in[z_0,z_{\lceil\beta\rceil})$,
\begin{eqnarray}\label{balanced}
\text{Freq}_k(\epsilon_i(x))=\text{Freq}_{\lfloor\beta\rfloor-k}(\epsilon_i(x)).
\end{eqnarray}
\begin{itemize}
\item[(1)] For any $x\in I_\beta$, we prove that $(\epsilon_i(x))_{i\ge1}$ is a $\beta$-expansion of $x$, i.e., $\sum_{i=1}^\infty\frac{\epsilon_i(x)}{\beta^i}=x$. In fact, by Lemma \ref{bijection}, it suffices to show $T_{\epsilon_n(x)}\circ\cdots\circ T_{\epsilon_1(x)}(x)\in I_\beta$ for all $n\in\N$. We only need to prove $T_{\epsilon_n(x)}\circ\cdots\circ T_{\epsilon_1(x)}(x)=T^n(x)$ by induction as follows. Let $n=1$.
    \begin{itemize}
    \item[\textcircled{\footnotesize{$1$}}] If $x\in[0,z_1)$, then $\epsilon_1(x)=0$ and $T_{\epsilon_1(x)}(x)=T_0(x)=T(x)$.
    \item[\textcircled{\footnotesize{$2$}}] If $x\in[z_k,z_{k+1})$ for some $k\in\{1,2,\cdots,\lfloor\beta\rfloor-1\}$, then $\epsilon_1(x)=k$ and $T_{\epsilon_1(x)}(x)=T_k(x)=T(x)$.
    \item[\textcircled{\footnotesize{$3$}}] If $x\in[z_{\lfloor\beta\rfloor},\frac{\lfloor\beta\rfloor}{\beta-1}]$, then $\epsilon_1(x)=\lfloor\beta\rfloor$ and $T_{\epsilon_1(x)}(x)=T_{\lfloor\beta\rfloor}(x)=T(x)$.
    \end{itemize}
    Assumes that for some $n\in\N$ we have $T_{\epsilon_n(x)}\circ\cdots\circ T_{\epsilon_1(x)}(x)=T^n(x)$.
    \begin{itemize}
    \item[\textcircled{\footnotesize{$1$}}] If $T^n(x)\in[0,z_1)$, then $\epsilon_{n+1}(x)=0$ and
    $$T_{\epsilon_{n+1}(x)}\circ T_{\epsilon_n(x)}\circ\cdots\circ T_{\epsilon_1(x)}(x)=T_0\circ T^n(x)=T^{n+1}(x).$$
    \item[\textcircled{\footnotesize{$2$}}] If $T^n(x)\in[z_k,z_{k+1})$ for some $k\in\{1,2,\cdots,\lfloor\beta\rfloor-1\}$, then $\epsilon_{n+1}(x)=k$ and
    $$T_{\epsilon_{n+1}(x)}\circ T_{\epsilon_n(x)}\circ\cdots\circ T_{\epsilon_1(x)}(x)=T_k\circ T^n(x)=T^{n+1}(x).$$
    \item[\textcircled{\footnotesize{$3$}}] If $T^n(x)\in[z_{\lfloor\beta\rfloor},\frac{\lfloor\beta\rfloor}{\beta-1}]$, then $\epsilon_{n+1}(x)=\lfloor\beta\rfloor$ and
    $$T_{\epsilon_{n+1}(x)}\circ T_{\epsilon_n(x)}\circ\cdots\circ T_{\epsilon_1(x)}(x)=T_{\lfloor\beta\rfloor}\circ T^n(x)=T^{n+1}(x).$$
    \end{itemize}
\end{itemize}
Combining (1) and (\ref{balanced}), we know that $\cL$-a.e. $x\in[z_0,z_{\lceil\beta\rceil}]$ has a balanced $\beta$-expansion. Let
$$N:=\big\{x\in I_\beta: x \text{ has no balanced }\beta\text{-expansions}\big\}.$$
We have already proved $\cL(N\cap[z_0,z_{\lceil\beta\rceil}])=0$. To end the proof of this lemma, we need to show $\cL(N)=0$. In fact, it suffices to prove $\cL(N\cap(0,z_0))=\cL(N\cap(z_{\lceil\beta\rceil},\frac{\lfloor\beta\rfloor}{\beta-1}))=0$.
\begin{itemize}
\item[i)] Prove $\cL(N\cap(0,z_0))=0$.
\newline By $\cL(N\cap[z_0,z_{\lceil\beta\rceil}])=0$, we know that for any $n\in\N$, $\cL(T_0^{-n}(N\cap[z_0,z_{\lceil\beta\rceil}]))=0$. It suffices to prove $N\cap(0,z_0)\subset\bigcup_{n=1}^\infty T_0^{-n}(N\cap[z_0,z_{\lceil\beta\rceil}])$.
\newline (By contradiction) Let $x\in N\cap(0,z_0)$ and assume $x\notin\bigcup_{n=1}^\infty T_0^{-n}(N\cap[z_0,z_{\lceil\beta\rceil}])$. By $x\in(0,z_0)$, one can verify that there exists $k\ge1$ such that $T_0^kx\in[z_0,z_{\lceil\beta\rceil}]$. Since $x\notin T_0^{-k}(N\cap[z_0,z_{\lceil\beta\rceil}])$, we must have $T_0^kx\notin N$. This means that there exists a balanced sequence $(w_i)_{i\ge1}\in\cA_\beta^\N$ such that $T_0^kx=\sum_{i=1}^\infty\frac{w_i}{\beta^i}$, and then
$$x=\frac{0}{\beta}+\frac{0}{\beta^2}+\cdots+\frac{0}{\beta^k}+\sum_{i=1}^\infty\frac{w_i}{\beta^{k+i}}=:\sum_{i=1}^\infty\frac{\epsilon_i}{\beta^i}$$
where $\epsilon_1=\cdots=\epsilon_k:=0$ and $\epsilon_{k+i}:=w_i$ for $i\ge1$. It follows that $(\epsilon_i)_{i\ge1}$ is a balanced $\beta$-expansion of $x$, which contradicts $x\in N$.
\item[ii)] Prove $\cL(N\cap(z_{\lceil\beta\rceil},\frac{\lfloor\beta\rfloor}{\beta-1}))=0$.
\newline By $\cL(N\cap[z_0,z_{\lceil\beta\rceil}])=0$, we know that for any $n\in\N$, $\cL(T_{\lfloor\beta\rfloor}^{-n}(N\cap[z_0,z_{\lceil\beta\rceil}]))=0$. It suffices to prove $N\cap(z_{\lceil\beta\rceil},\frac{\lfloor\beta\rfloor}{\beta-1})\subset\bigcup_{n=1}^\infty T_{\lfloor\beta\rfloor}^{-n}(N\cap[z_0,z_{\lceil\beta\rceil}])$.
\newline (By contradiction) Let $x\in N\cap(z_{\lceil\beta\rceil},\frac{\lfloor\beta\rfloor}{\beta-1})$ and assume $x\notin\bigcup_{n=1}^\infty T_{\lfloor\beta\rfloor}^{-n}(N\cap[z_0,z_{\lceil\beta\rceil}])$. By $x\in(z_{\lceil\beta\rceil},\frac{\lfloor\beta\rfloor}{\beta-1})$, one can verify that there exists $k\ge1$ such that $T_{\lfloor\beta\rfloor}^kx\in[z_0,z_{\lceil\beta\rceil}]$. Since $x\notin T_{\lfloor\beta\rfloor}^{-k}(N\cap[z_0,z_{\lceil\beta\rceil}])$, we must have $T_{\lfloor\beta\rfloor}^kx\notin N$. This means that there exists a balanced sequence $(w_i)_{i\ge1}\in\cA_\beta^\N$ such that $T_{\lfloor\beta\rfloor}^kx=\sum_{i=1}^\infty\frac{w_i}{\beta^i}$, and then
$$x=\frac{\lfloor\beta\rfloor}{\beta}+\frac{\lfloor\beta\rfloor}{\beta^2}+\cdots+\frac{\lfloor\beta\rfloor}{\beta^k}+\sum_{i=1}^\infty\frac{w_i}{\beta^{k+i}}=:\sum_{i=1}^\infty\frac{\epsilon_i}{\beta^i}$$
where $\epsilon_1=\cdots=\epsilon_k:=\lfloor\beta\rfloor$ and $\epsilon_{k+i}:=w_i$ for $i\ge1$. It follows that $(\epsilon_i)_{i\ge1}$ is a balanced $\beta$-expansion of $x$, which contradicts $x\in N$.
\end{itemize}
\end{proof}

\begin{proof}[Proof of Theorem \ref{main3}]
Let $\beta\in(1,2)$ such that $\beta^m-\beta^{m-1}-\cdots-\beta-1=0$ for some integer $m\ge2$ and let $c=\frac{(m-1)(2-\beta)}{2(m\beta+\beta-2m)}$. We have $c>0$ since $m-1>0$, $2-\beta>0$ and $m\beta+\beta-2m>0$, which is a consequence of
$$m+1<2m<2(\beta^{m-1}+\cdots+\beta+1)=2\beta^m=\frac{2}{2-\beta},$$
where the equalities follows from
$$\beta^m=\beta^{m-1}+\cdots+\beta+1=\frac{\beta^m-1}{\beta-1}.$$

For any $x\in[0,\frac{1}{\beta-1}-1]$, define
$$f(x):=\frac{(\beta-1)(1-(m-1)x)}{m\beta+\beta-2m}.$$
Then
$$f(0)=\frac{\beta-1}{m\beta+\beta-2m}=\frac{1}{2}+c\quad\text{and}\quad f(\frac{1}{\beta-1}-1)=\frac{m\beta+1-2m}{m\beta+\beta-2m}=\frac{1}{2}-c,$$
i.e., $[f(\frac{1}{\beta-1}-1),f(0)]=[\frac{1}{2}-c,\frac{1}{2}+c]$. Since $f$ is continuous, for any $p\in[\frac{1}{2}-c,\frac{1}{2}+c]$, there exists $b\in[0,\frac{1}{\beta-1}-1]$ such that $f(b)=p$. We only consider $b\in[0,\frac{1}{\beta-1}-1)$ in the following, since the proof for the case $b\in(0,\frac{1}{\beta-1}-1]$ is similar. Define $T: I_\beta\to I_\beta$ by
$$T(x):=\left\{\begin{array}{ll}
T_0(x)=\beta x & \text{for } x\in[0,\frac{b+1}{\beta}),\\
T_1(x)=\beta x-1 & \text{for } x\in[\frac{b+1}{\beta},\frac{1}{\beta-1}].
\end{array}\right.$$
\begin{figure}[H]\label{T}
\begin{tikzpicture}
\draw[-](0,0)--(10,0);
\draw[-](0,0)--(0,10);
\draw[-](0,10)--(10,10);
\draw[-](10,0)--(10,10);
\draw[-](0,0)--(77/17,7.7);
\draw[-](77/17,0.7)--(10,10);
\draw[dashed](77/17,0)--(77/17,10);
\draw[dashed](0,0)--(10,10);
\draw[dashed](0.7,0.7)--(7.7,0.7);
\draw[dashed](0.7,0.7)--(0.7,7.7);
\draw[dashed](0.7,7.7)--(7.7,7.7);
\draw[dashed](7.7,0.7)--(7.7,7.7);
\foreach \x in {0}{\draw(\x,0)--(\x,0.1)node[below,outer sep=2pt,font=\tiny]at(\x,0){$0$};}
\foreach \x in {0.7}{\draw(\x,-0.1)--(\x,0.1)node[below,outer sep=2pt,font=\tiny]at(\x,0){$b$};}
\foreach \x in {77/17}{\draw(\x,-0.1)--(\x,0.1)node[below,outer sep=2pt,font=\tiny]at(\x,0){$\frac{b+1}{\beta}$};}
\foreach \x in {7.7}{\draw(\x,-0.1)--(\x,0.1)node[below,outer sep=2pt,font=\tiny]at(\x,0){$b+1$};}
\foreach \x in {10}{\draw(\x,0)--(\x,0.1)node[below,outer sep=2pt,font=\tiny]at(\x,0){$\frac{1}{\beta-1}$};}
\foreach \y in {0.7}{\draw(-0.1,\y)--(0.1,\y)node[left,outer sep=2pt,font=\tiny]at(0,\y){$b$};}
\foreach \y in {7.7}{\draw(-0.1,\y)--(0.1,\y)node[left,outer sep=2pt,font=\tiny]at(0,\y){$b+1$};}
\foreach \y in {10}{\draw(0,\y)--(0.1,\y)node[left,outer sep=2pt,font=\tiny]at(0,\y){$\frac{1}{\beta-1}$};}
\end{tikzpicture}
\caption{The graph of $T$.}
\end{figure}
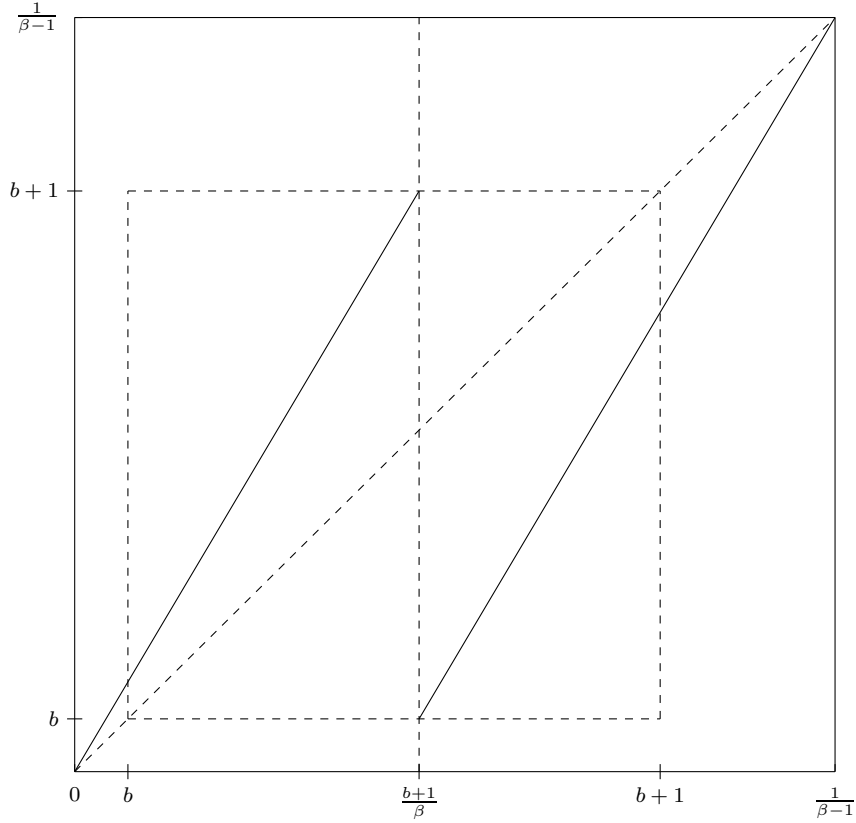
Noting that $T_0(\frac{b+1}{\beta})=b+1$ and $T_1(\frac{b+1}{\beta})=b$, by Section 3 in \cite{K90}, there exists a $T$-invariant ergodic measure $\mu\ll\cL$ (Lebesgue measure) on $ I_\beta$ such that for $\cL$-a.e. $x\in I_\beta$,
\begin{eqnarray}\label{density1}
\frac{d\mu}{d\cL}(x)=\sum_{n=0}^\infty\frac{\mathbbm{1}_{[0,T^n(b+1)]}(x)}{\beta^n}-\sum_{n=0}^\infty\frac{\mathbbm{1}_{[0,T^n(b)]}(x)}{\beta^n}
\end{eqnarray}
and $\nu:=\frac{1}{\mu(I_\beta)}\cdot\mu$ is a $T$-invariant ergodic probability measure on $ I_\beta$.
\begin{itemize}
\item[(1)] For $1\le n\le m-1$, prove $T^n(b)=\beta^nb<\frac{b+1}{\beta}\le\beta^nb+\beta^n-\beta^{n-1}-\cdots-\beta-1=T^n(b+1)$. Note that $\beta^m=\beta^{m-1}+\cdots+\beta+1=\frac{\beta^m-1}{\beta-1}$.
    \begin{itemize}
    \item[\textcircled{\footnotesize{$1$}}] By $b<\frac{1}{\beta-1}-1=\frac{1}{\beta^m-1}\le\frac{1}{\beta^{n+1}-1}$, we get $\beta^nb<\frac{b+1}{\beta}$.
    \item[\textcircled{\footnotesize{$2$}}] By $\frac{1}{\beta}+\cdots+\frac{1}{\beta^{n+1}}\le\frac{1}{\beta}+\cdots+\frac{1}{\beta^m}=1$, we get $\beta^n+\cdots+\beta+1\le\beta^{n+1}$ and then $\beta^n+\cdots+\beta+1+b\le\beta^{n+1}+\beta^{n+1}b$ which implies $\frac{b+1}{\beta}\le\beta^nb+\beta^n-\beta^{n-1}-\cdots-\beta-1$.
    \end{itemize}
\item[(2)] For $n\ge m$, prove $T^n(b)=T^n(b+1)$.
    \newline It suffices to prove $T^m(b)=T^m(b+1)$. In fact, this follows from (1) and $\beta^mb=\beta^mb+\beta^m-\beta^{m-1}-\cdots-\beta-1$.
\end{itemize}
Combining (\ref{density1}) and (2), we know that for $\cL$-a.e. $x\in I_\beta$,
\begin{eqnarray}\label{density2}
\frac{d\mu}{d\cL}(x)=\sum_{n=0}^{m-1}\frac{\mathbbm{1}_{[0,T^n(b+1)]}(x)-\mathbbm{1}_{[0,T^n(b)]}(x)}{\beta^n}.
\end{eqnarray}
Thus
\begin{eqnarray*}
\mu[0,\frac{b+1}{\beta})&=&\int_0^\frac{b+1}{\beta}\frac{d\mu}{d\cL}(x)dx\\
&=&\sum_{n=0}^{m-1}\frac{\min\{T^n(b+1),\frac{b+1}{\beta}\}-\min\{T^n(b),\frac{b+1}{\beta}\}}{\beta^n}\\
&\xlongequal[]{\text{by (1)}}&\sum_{n=0}^{m-1}\frac{\frac{b+1}{\beta}-\beta^nb}{\beta^n}\\
&=&1-(m-1)b
\end{eqnarray*}
where the last equality follows from $\frac{1}{\beta}+\cdots+\frac{1}{\beta^m}=1$. By
\begin{eqnarray*}
\mu(I_\beta)&=&\int_0^{\frac{1}{\beta-1}}\frac{d\mu}{d\cL}(x)dx\\
&=&\sum_{n=0}^{m-1}\frac{T^n(b+1)-T^n(b)}{\beta^n}\\
&\xlongequal[]{\text{by (1)}}&1+\sum_{n=1}^{m-1}\frac{\beta^n-\beta^{n-1}-\cdots-\beta-1}{\beta^n}\\
&=&1+\sum_{n=1}^{m-1}(1-\frac{1}{\beta}-\cdots-\frac{1}{\beta^n})\\
&=&m-\frac{m-1}{\beta}-\frac{m-2}{\beta^2}-\cdots-\frac{1}{\beta^{m-1}},
\end{eqnarray*}
we get $$\frac{1}{\beta}\cdot\mu(I_\beta)=\frac{m}{\beta}-\frac{m-1}{\beta^2}-\frac{m-2}{\beta^3}-\cdots-\frac{1}{\beta^m}.$$ It follows from the subtraction of the above two equalities that $\mu(I_\beta)=\frac{m\beta+\beta-2m}{\beta-1}$. Therefore $\nu=\frac{\beta-1}{m\beta+\beta-2m}\cdot\mu$ and
$$\nu[0,\frac{b+1}{\beta})=\frac{(\beta-1)(1-(m-1)b)}{m\beta+\beta-2m}=f(b)=p.$$
Since $T: I_\beta\to I_\beta$ is ergodic with respect to $\nu$, it follows from Birkhoff's Ergodic Theorem that for $\nu$-a.e. $x\in I_\beta$ we have
$$\lim_{n\to\infty}\frac{1}{n}\sum_{k=0}^{n-1}\mathbbm{1}_{[0,\frac{b+1}{\beta})}T^k(x)=\int_0^\frac{1}{\beta-1}\mathbbm{1}_{[0,\frac{b+1}{\beta})}d\nu=\nu[0,\frac{b+1}{\beta})=p,$$
which implies that for $\nu$-a.e. $x\in[b,b+1]$,
$$\lim_{n\to\infty}\frac{1}{n}\sum_{k=0}^{n-1}\mathbbm{1}_{[0,\frac{b+1}{\beta})}T^k(x)=p.$$
By (\ref{density2}) and (1), we know that for $\cL$-a.e. $x\in[b,b+1]$, $\frac{d\mu}{d\cL}(x)\ge1$. This implies $\cL\ll\mu(\sim\nu)$ on $[b,b+1]$, and then for $\cL$-a.e. $x\in[b,b+1]$, we have
$$\lim_{n\to\infty}\frac{1}{n}\sum_{k=0}^{n-1}\mathbbm{1}_{[0,\frac{b+1}{\beta})}T^k(x)=p.$$
For every $x\in I_\beta$, define a sequence $(\epsilon_i(x))_{i\ge1}\in\{0,1\}^\N$ by
$$\epsilon_i(x):=\left\{\begin{array}{ll}
0 & \text{if } T^{i-1}x\in[0,\frac{b+1}{\beta})\\
1 & \text{if } T^{i-1}x\in[\frac{b+1}{\beta},\frac{1}{\beta-1}]
\end{array}\right.\quad\text{for all }i\ge1.$$
Then by
$$\mathbbm{1}_{[0,\frac{b+1}{\beta})}(T^kx)=1\Leftrightarrow T^kx\in[0,\frac{b+1}{\beta})\Leftrightarrow\epsilon_{k+1}(x)=0,$$
we know that for $\cL$-a.e. $x\in[b,b+1]$,
\begin{eqnarray}\label{frequency}
\lim_{n\to\infty}\frac{\sharp\{1\le i\le n:\epsilon_i(x)=0\}}{n}=p,\quad\text{i.e.,}\quad\text{Freq}_0(\epsilon_i(x))=p.
\end{eqnarray}
By the same way as in the proof of Lemma \ref{lemma}, we know that for every $x\in I_\beta$, the $(\epsilon_i(x))_{i\ge1}$ defined above is a $\beta$-expansion of $x$, and Lebesgue almost every $x\in I_\beta$
has a $\beta$-expansion with frequency of zeros equal to $p$. Then we finish the proof by applying Theorem \ref{main1}.
\end{proof}

\section{further questions}

First we wonder whether Theorem \ref{main1} can be generalized.

\begin{question}\label{Q1}
Let $\beta\in(1,+\infty)\setminus\N$ and $\overline{p},\underline{p}\in[0,1]^{\lceil\beta\rceil}$. Is it true that Lebesgue almost every $x\in I_\beta$ has a $\beta$-expansion of frequency $(\overline{p},\underline{p})$ if and only if Lebesgue almost every $x\in I_\beta$ has a continuum of $\beta$-expansions of frequency $(\overline{p},\underline{p})$?
\end{question}

If a positive answer is given to this question, by Theorem \ref{main1} and \ref{main2}, there is also a positive answer to the following question.

\begin{question}\label{Q2}
Let $\beta\in(2,+\infty)\setminus\N$. Is it true that Lebesgue almost every $x\in I_\beta$ has a continuum of balanced $\beta$-expansions?
\end{question}

Even if a negative answer is given to Question \ref{Q1}, there may be a positive answer to Question \ref{Q2}. An intuitive reason is that, when $\beta>2$, we have $\sharp\cA_\beta\ge3$ and balanced $\beta$-expansions are much more flexible than simply normal $\beta$-expansions.

The last question we want to ask is on the variability of the frequency related to Theorem \ref{main3}. Let $\beta>1$. If there exists $c=c(\beta)>0$ such that for any $p_0,p_1,\cdots,p_{\lceil\beta\rceil-1}\in[\frac{1}{\lceil\beta\rceil}-c,\frac{1}{\lceil\beta\rceil}+c]$ with $p_0+p_1+\cdots+p_{\lceil\beta\rceil-1}=1$, every $x\in I_\beta^o$ has a $\beta$-expansion $(\epsilon_i)_{i\ge1}$ with
$$\text{Freq}_0(\epsilon_i)=p_0,\text{ Freq}_1(\epsilon_i)=p_1,\cdots,\text{ Freq}_{\lceil\beta\rceil-1}(\epsilon_i)=p_{\lceil\beta\rceil-1},$$
we say that $\beta$ is a \textit{variational frequency} base. Similarly, if there exists $c=c(\beta)>0$ such that for any $p_0,p_1,\cdots,p_{\lceil\beta\rceil-1}\in[\frac{1}{\lceil\beta\rceil}-c,\frac{1}{\lceil\beta\rceil}+c]$ with $p_0+p_1+\cdots+p_{\lceil\beta\rceil-1}=1$, Lebesgue almost every $x\in I_\beta$ has a $\beta$-expansion $(\epsilon_i)_{i\ge1}$ with
$$\text{Freq}_0(\epsilon_i)=p_0,\text{ Freq}_1(\epsilon_i)=p_1,\cdots,\text{ Freq}_{\lceil\beta\rceil-1}(\epsilon_i)=p_{\lceil\beta\rceil-1},$$
we say that $\beta$ is an \textit{almost variational frequency} base.

Obviously, all variational frequency bases are almost variational frequency bases. Baker's results (see the statements between Theorem \ref{main2} and Theorem \ref{main3}) say that all numbers in $(1,\frac{1+\sqrt{5}}{2})$ are variational frequency bases and all numbers in $[\frac{1+\sqrt{5}}{2},2)$ are not variational frequency bases. Fortunately, Theorem \ref{main3} says that pseudo-golden ratios (which are all in $[\frac{1+\sqrt{5}}{2},2)$) are almost variational frequency bases. We wonder whether all numbers in $[\frac{1+\sqrt{5}}{2},2)$ are almost variational frequency bases.

For all integers $\beta>1$, we know that Lebesgue almost every $x\in[0,1]$ has a unique $\beta$-expansion $(\epsilon_i)_{i\ge1}$, and this expansion satisfies
$$\text{Freq}_0(\epsilon_i)=\text{Freq}_1(\epsilon_i)=\cdots=\text{Freq}_{\beta-1}(\epsilon_i)=\frac{1}{\beta}$$
by Borel's normal number theorem. Therefore all integers are not almost variational frequency bases. It is natural to ask the following question.

\begin{question}\label{Q3}
Is it true that all non-integers greater than $1$ are almost variational frequency bases?
\end{question}

A positive answer is expected.

\begin{ack}
The author is grateful to Professor Jean-Paul Allouche for his advices on a former version of this paper, and also grateful to the Oversea Study Program of Guangzhou Elite Project.
\end{ack}

\end{document}